\newtheorem{theorem}{Theorem}[section]
\newtheorem{lemma}[theorem]{Lemma}
\newtheorem{corollary}[theorem]{Corollary}
\newtheorem{proposition}[theorem]{Proposition}
\newtheorem{remark}[theorem]{Remark}
\theoremstyle{definition}
\newcommand{\m}{\mathfrak m}
\newcommand{\p}{\mathfrak p}
\newcommand{\q}{\mathfrak q}
\DeclareMathOperator{\height}{height}
\DeclareMathOperator{\Spec}{Spec}
\def\Ext{\rm{Ext}}
\def\U{{\mathcal U}}
\def\R{{\mathcal R}}
\def\F{{\mathcal F}}
\def\M{{\mathcal M}}
\def\I{{\mathcal I}}
\def\L{{\mathcal L}}
\newtheorem{mainthm}{Theorem}
\newtheorem*{remark*}{Remark}
\DeclareMathOperator{\h}{H}
\DeclareMathOperator{\depth}{depth}
\DeclareMathOperator{\ann}{Ann}
\DeclareMathOperator{\im}{Im}
\DeclareMathOperator{\coker}{Coker}
\DeclareMathOperator{\g-reg}{g-reg}
\DeclareMathOperator{\ass}{Ass}
\titleformat{\subsection}[block]{\Large\bfseries}{\rlap{\thesubsection.}}{0.7em}{\hspace{.05\linewidth}}
\numberwithin{equation}{section}
\begin{document}
	\noindent
	\title{Multiplicity versus Buchsbaumness of  the special fiber ring }
	
	\author{Anoot Kumar Yadav and Kumari Saloni}
	\subjclass[2020]{Primary 13A30, 13D40, 13H10; Secondary 13D45}
	\keywords{Buchsbaum rings, local cohomology modules, Hilbert polynomial, blow-up algebra\\ Both the authors declare equal contribution in the paper.}
    	\address{Department of Mathematics
			IIT Patna, Bihta  Patna, 801106, India
			}
	\email{anoot\_2021ma06@iitp.ac.in, vicky.anoot@gmail.com}
	\address{Department of Mathematics,
			IIT Patna, Bihta  Patna, 801106, India
			}
	\email{ksaloni@iitp.ac.in}
    
	\begin{abstract}
 In this paper, we study the passage of Buchsbaumness from a Noetherian local ring $(A,\m)$ to the blow-up algebras,  $G(I)=\mathop{\oplus}\limits_{n\geq 0}I^n/I^{n+1}$ and  $F_{\m}(I)=\mathop{\oplus}\limits_{n\geq 0} I^n/\m I^n$, for an $\m$-primary ideal $I$.  Suppose $J\subseteq I$ is a minimal reduction of $I$. In Noetherian local rings, it is true that the fiber multiplicity has an upper bound: $f_0(I)\leq e_1(I)-e_1(J)-e_0(I)+\ell(A/I)+\mu(I)-\dim A+1$ and the first Hilbert coefficient satisfies: $e_1(I)-e_1(J)\geq 2e_0(I)-2\ell(A/I)-\ell(I/(I^2+J))$ where $\ell(*)$ denotes length. Assume the boundary conditions in both the above. Then under mild conditions, we prove that if $A$ is generalized Cohen-Macaulay, then $F_\m(I)$ is generalized Cohen-Macaulay and geometric regularity of $F_\m(I)=\left\lfloor \frac{d}{2}\right\rfloor +1+d$. In addition, if $\depth A>0$, then $\depth F_\m(I)=\depth A$. Further, if $A$ is Buchsbaum and $\depth A\geq d-1$, then  $F_\m(I)$ is Buchsbaum. To handle the special fiber ring through exact sequences, we generalize earlier results of Corso and Ozeki on the Buchsbaumness of the associated graded rings $G(I)=\mathop\oplus_{n\geq 0} I^n/I^{n+1}$ to the case of certain module filtration. In particular, we prove that if $A$ is Buchsbaum (generalized Cohen-Macaulay respectively) and $e_1(\I)-e_1(J)=2e_0(\I)-2\ell(A/I_1)-\ell(I_1/(I_2+J))$ where $\I$ is a multiplicative $I$-good filtration and $J\subseteq I$ is a standard parameter ideal, then under some mild conditions, the associated graded ring $G(\I)$  is Buchsbaum (generalized Cohen-Macaulay respectively) with the preserved $\mathds{I}$-invariant, i.e., $\mathds{I}(G(\I))=\mathds{I}(A)$.

	\end{abstract}
	\maketitle
	
	\section{Introduction} 
Let $(A,\m)$ be a Noetherian local ring of dimension $d>0$, $I$ an $\m$-primary ideal of $A$ and $M$ be a finitely generated faithful $A$-module of dimension
    $d$. Let $\I$ be a multiplicative $I$-good filtration of $A$, i.e, $\I$ is a sequence of ideals $\I=\{I_n\}_{n\geq 0}$ satisfying $I_{n+1}\subseteq I_n$, $I_nI_m\subseteq I_{n+m}$ for all $m, n \geq 0$ and $II_n=I_{n+1}$ for all $n\gg 0 $. The associated graded ring $G(\I)=\mathop{\oplus}\limits_{n\geq 0} I_n/I_{n+1}$, the Rees ring $R(\mathcal{I})=\oplus_{n=0}^\infty I_nt^n$ and the special fiber ring $F_{\m}(I)=\mathop\oplus\limits_{n\geq 0}I^n/\m I^n$ are referred to as the blow-up algebras. When $\I$ is the $I$-adic filtration $\{I^n\}$, we write $G(I)$ and $R(I)$ for the associated graded ring and the Rees ring, respectively. 
	
	The structural properties of the local ring and the blow-up algebras are greatly connected to each other. The local ring $A$ inherits many important properties, such as reducedness, normality, and Cohen-Macaulayness, from the associated graded ring. 
	However, this is not true for Buchsbaum rings. See \cite[Example 4.10]{G84} for an example where $G(I)$ is Buchsbaum and $A$ is not Buchsbaum. A finite $A$-module $M$ is said to be a {\textit{Buchsbaum module}} if the invariant $\ell_A(M/JM)-e_0^J(M)$ is independent of the choice of parameter ideals $J$ of $M$ where $\ell_A(M/JM)$ and $e_0^J(M)$ denote the length of $M/JM$ and the multiplicity of $M$ relative to $J$ respectively. The above condition is equivalent to saying that every system $x_1,\ldots, x_d$ of parameters of $M$ forms a weak $M$-sequence, that is
	$ (x_1,\ldots,x_{i-1})M:_M x_i=(x_1,\ldots,x_{i-1})M:_M \m$ for $1\leq i\leq d.$  A module $M$ is Cohen-Macaulay if and only if it is Buchsbaum with $\ell_A(M/JM)-e_0^J(M)=0$ for every parameter ideal $J$ of $M$. Buchsbaum modules appear in abundance. In fact, it was proved in \cite[Theorem 1.1]{Go1} that for given integers $d>0$ and $h_i\geq 0$ for $0\leq i\leq d-1$,  there exists a Buchsbaum local ring $A$ with $\dim A=d$ and $h_i=\ell_A(\h_\m^i(A))$ for $0\leq i\leq d-1$ where $\h_\m^i(A)$ denote the $i$-th local cohomology module of $A$ with support in $\m$.

    	We are interested in the inheritance of ring-theoretic properties while passing from $A$ to the blow-up algebras. For example, suppose $A$ is a Cohen-Macaulay local ring. If the reduction number of $\I$ is at most one, then $G(\I)$ is Cohen-Macaulay.

        In a Noetherian ring, it is true that
    \begin{equation}\label{very-1st-eqn}
		 e_1(\I)-e_1(J)\geq 2e_0(\I)-2\ell(A/I_1)-\ell(I_1/(I_2+J)) \end{equation}
     where $J=(a_1,\ldots,a_d)\subseteq I$ is a minimal reduction of $\I$, see
	 \cite[Theorem 2.4]{rossi-valla}. 
     When $A$ is Cohen-Macaulay, 
	 authors in \cite{elias-valla} and \cite{GR} proved that the equality  in \eqref{very-1st-eqn}     
    holds true if and only if $I_{n+1}=JI_n$ for all $n\geq 2$ and $J\cap I_2=JI_1$. In this case,  $G(\I)$ is Cohen-Macaulay. 
    Now suppose that $A$ is a Buchsbaum local ring and \begin{equation}\label{very-2nd-eqn}
		e_1(I)-e_1(J)= 2e_0(I)-2\ell(A/I)-\ell(I/(I^2+J)). \end{equation}
	Then Corso \cite{Corso09} proved that $G(I)$  is a Buchsbaum ring in case of $d=1$ and $I=\m$.  Rossi and Valla in \cite[Theorem 2.1]{rossi-valla}
		 generalized Corso's result for modules. In \cite{ozeki12} and  \cite{ozeki13}, Ozeki extended Corso's result for an $
		 \m$-primary ideal in an arbitrary dimension.

Further, studying the properties of special fiber cone, Corso \cite{Corso09} proved that $$f_0(I)\leq e_1(I)-e_1(J)-e_0(I)+\ell(A/I)+\mu(I)-d+1$$ where $f_0(I)$ is the multiplicity of $F_{\m}(I)$ and $J\subseteq I$ is a minimal reduction of $I$.  The boundary case of the above inequality  was studied by Corso-Polini-Vasconcelos \cite{CPV}  and Rossi-Valla  \cite[Chapter 5]{rossi-valla} when $A$ is a Cohen-Macaulay ring. In that case, $F_\m(I)$ need not be Cohen-Macaulay even if the equality
\begin{equation}\label{equality-for-fiber-cone}
f_0(I)= e_1(I)-e_1(J)-e_0(I)+\ell(A/I)+\mu(I)-d+1
	\end{equation}
holds \cite[Example 2.3]{CPV}, however, it remains unmixed \cite[Theorem 2.5]{CPV}. 

Along the similar lines, it is of interest to study the cases when $G(I)$ or $F_\m(I)$ inherits Buchsbaumness from the local ring.  
Our main motivation  is the following result of Corso-Polini-Vasconcelos:
	\begin{remark}\label{remark-intro}\cite[Corollary 2.6 ]{CPV}
		Suppose $A$ is Cohen-Macaulay and the equality  \eqref{equality-for-fiber-cone} holds. Suppose $\depth G(I)\geq d-1.$ Then $F_\m(I)$ is Cohen-Macaulay. 
	\end{remark}
In case of generalized Cohen-Macaulay or Buchsbaum local rings, we aim to obtain results in the same spirit of Remark \ref{remark-intro}. Since it is useful to have some control over the associated graded ring apriori while studying $F_\m(I)$, we consider the equality in \eqref{very-2nd-eqn}
along with \eqref{equality-for-fiber-cone}. Note that  \eqref{very-2nd-eqn}  is stronger than saying $\depth G(I)\geq d-1$ in the set-up of Remark \ref{remark-intro} and interestingly,  
it also enforces the passage of generalized Cohen-Macaulayness (Theorem \ref{t1}) or Buchsbaumness (Theorem \ref{t2}) from $A$ to $G(I)$. 
We prove the following results for generalized Cohen-Macaulay and Buchsbaum local rings, respectively. 
\begin{mainthm}\label{theorem-fiber-m1} (Corollay \ref{corollary_8.5} and \ref{corollay_8.8})
	Let $A$ be a generalized Cohen-Macaulay local ring and  $J=(a_1,\ldots,a_d)\subseteq I$  a reduction of $I$  such that $(a_1,\ldots, \check{a_i},\ldots,a_d):a_i\subseteq I$ for $1\leq i\leq d$. Suppose $J$ is a standard parameter ideal and the equalities in  \eqref{very-2nd-eqn} and \eqref{equality-for-fiber-cone} hold. 
Then,  $F_\m(I)$ is generalized Cohen-Macaulay and geometric regularity of $F_{\m}(I)=\left \lfloor \frac{d}{2}\right \rfloor +1+d$. Moreover, if $\depth A>0$ then  $\depth F_\m(I)=\depth A.$ 
\end{mainthm}
 
	\begin{mainthm}\label{theorem-fiber-m2} (Corollary \ref{theorem-fiber-2(a)})
			Let $A$ be a Buchsbaum local ring with $\depth A\geq d-1$ and   $J=(a_1,\ldots,a_d)\subseteq I$ a reduction of $I$  such that $(a_1,\ldots, \check{a_i},\ldots,a_d):a_i\subseteq I$ for $1\leq i\leq d$. Suppose the equalities in \eqref{very-2nd-eqn} and \eqref{equality-for-fiber-cone} hold. Then 
		$F_\m(I)$ is Buchsbaum. 
		\end{mainthm}
In dimension two, the depth condition may be removed and keeping the rest of hypothesis as in Theorem \ref{theorem-fiber-m2}, one can conclude that $F_{\m}(I)/ \h^0_\U(F_\m (I))$ is Buchsbaum, see Corollary \ref{corollay_8.12}.  Here, $\mathcal{U}:=\m R(I)+R(I)_+$. 

In the above results, the primary role of the equality \eqref{very-2nd-eqn} is its impact on the homological properties of $G(I)$ which can be realized through the following theorems. These are generalizations of earlier results of Ozeki \cite{ozeki12} and \cite{ozeki13} in the general set-up of multiplicative  $I$-good filtration $\mathcal{I}.$  We write $\xi=\m R(J)+R(J)_+$ where $R(J)=\mathop\oplus\limits_{n\geq 0}J^nt^n$. Recall that the {\textit{$\mathds{I}$-invariant}} of an $A$-module $M$ of dimension $d$ is 
	\begin{align*}
		\mathbb{I}(M)= \mathop\sum_{i=0}^{d-1}\binom{d-1}{i}\ell_A(\h_{\m}^i(M)).
	\end{align*}		 
		 \begin{mainthm}\label{t1}(Corollary \ref{Corollary 7.4})
		 	Let $A$ be a generalized Cohen-Macaulay local ring and $J=(a_1,\ldots,a_d)\subseteq I$  a reduction of $\I$  such that $(a_1,\ldots, \check{a_i},\ldots,a_d):a_i\subseteq I$ for $1\leq i\leq d$. Suppose $J$ is a standard parameter ideal  and the equality \begin{equation*}e_1(\I)-e_1(J)=2e_0(\I)-2\ell(A/I_1)-\ell(I_1/(I_2+J))
		 	\end{equation*}  holds. Then $G(\I)$ is generalized Cohen-Macaulay with $$\ell(\h^0_{\xi}(G(\I)))=\ell(\h^0_{\m}(A)),~ \h^i_{\xi}(G(\I))=[\h^i_{\xi}(G(\I))]_{2-i}\cong \h^i_{\m}(A)$$ for $1\leq i\leq d-1$ and $a(G(\I))\leq 2-d.$  Furthermore,
	\begin{enumerate}
\item
 $e_2(\I)=e_1(J)+e_2(J)+e_1(\I)-e_0(\I)+\ell_A(A/I_1)$ \mbox{if } $d\geq 2$;
 \item
 $e_i(\I)=e_{i-2}(J)+2e_{i-1}(J)+e_i(J)$ \mbox{for } $3\leq i\leq d$.
	 \end{enumerate}
		 \end{mainthm}
	 When this is the case, we clearly have $\depth G(\I)=\depth A$ and $\mathbb{I}(G(\I))=\mathbb{I}(A)$. 
\begin{mainthm}\label{t2}(Theorem \ref{m2})
		 		 	Let $A$ be a Buchsbaum local ring and   $J=(a_1,\ldots,a_d)\subseteq I$ a reduction of $\I$  such that $(a_1,\ldots, \check{a_i},\ldots,a_d):a_i\subseteq I$ for $1\leq i\leq d$. Suppose \begin{equation*}e_1(\I)-e_1(J)=2e_0(\I)-2\ell(A/I_1)-\ell(I_1/(I_2+J)).\end{equation*} Then $G(\I)$ is Buchsbaum with $\mathbb{I}(G(\I))=\mathbb{I}(A)$.
		 \end{mainthm}
Indeed, we prove the above results in the set-up of $I$-good filtration of a finitely generated faithful module $M$ and then restrict it to the ring case. Recall that an $I$-filtration  $\M=\{M_n\}_{n\geq 0}$ is a collection of submodules of $M$ satisfying $M_{n+1}\subseteq M_n$, $IM_n\subseteq M_{n+1}$ for all $n\in \mathbb{Z}$. If, in addition, $IM_n=M_{n+1}$ for all $n\gg 0$, then $\M$ is called an $I$-good filtration of $M$.       
      Let $\M=\{M_n\}_{n\geq 0}$ be an $I$-good filtration of $M$. Then the analogue of inequality \eqref{very-1st-eqn} in this module context is: 
	 \begin{equation}\label{very-1st-eqn(a)}
	 	e_1(\M)-e_1^J(M)\geq 2e_0(\M)-2\ell(M/M_1)-\ell(M_1/(M_2+JM)) \end{equation}
	 where $J=(a_1,\ldots,a_d)\subseteq I$ is a minimal $\M$-reduction of $\I$, see
	 \cite[Theorem 2.4]{rossi-valla}.  The distinction is necessary because, in proving the Buchsbaumness of fiber cone $F_\m(I)$, we make crucial use of the filtration $\M_L$, where $L$ is a submodule of $M$ and  $\M_L$ is defined as $
     \M_L:\{ M_0=M, M_{n+1}=I^nL ~\mbox{for}~ n\geq 0\}.$ $\M_L$ is an $I$-good filtration of module $M$. However, in the ring setting where $M=A$ and $L=\m$, the corresponding filtration $\{I_0=A, I_{n+1}=I^n\m ~\text{for}~ n\geq 0\}$ fails to be an multiplicative $I$-good filtration of $A$.
Our methods are inspired by the works \cite{Corso09}, \cite{CPV}, \cite{ozeki12}, \cite{ozeki13}, \cite{rossi-valla}, and \cite{sal20}.

 	We organize the paper into various sections. We begin by introducing our notation in Section \ref{notation}. In Section \ref{Section-Pre}, we discuss preliminary notions and results which are needed subsequently. This section includes a brief discussion on Sally module, $S_J(\M)$ corresponding to the filtration $\M=\{I_nM\}_{n\geq 0}$ and $\M=M_L.$ In Section \ref{Section-inequality}, we mainly discuss some cases when equality holds in \eqref{very-1st-eqn(a)}. In Section \ref{Section-proof-1}, we prove a necessary and sufficient condition for equality in \eqref{very-1st-eqn(a)}. Then we take a detour in Section \ref{Section-detour} to discuss the generalized depth and generalized Cohen-Macaulayness in the set up of module filtration. In Section \ref{Section-main-1}, we complete the proofs of Theorem \ref{t1} and Theorem \ref{t2}. In the last section of the paper, we prove Theorem \ref{theorem-fiber-m1} and Theorem \ref{theorem-fiber-m2}. 
    
\section{Notation}\label{notation}

    
    Throughout this paper, let $\I=\{I_n\}_{n\geq 0}$ be a multiplicative $I$-good filtration of $A$ and $\M=\{M_n\}_{n\geq 0}$ be an $I$-good filtration of $M$ as defined in the previous section. The associated graded module $G(\M)=\mathop{\oplus}\limits_{n\geq 0} M_n/M_{n+1}$ and the Rees module $R(\M)=\mathop{\oplus}\limits_{n\geq 0}^\infty M_nt^n$ are modules over the Rees algebra $R(I)=\mathop{\oplus}\limits_{n\geq 0} I^nt^n$. An ideal $J\subseteq I$ is called an {\em $\M$-reduction of $I$} if $M_{n+1}=JM_{n}$ for $n\gg 0.$ We say that $J$ is a {\em minimal $\M$-reduction of $I$} if it is minimal with respect to inclusion. For a reduction $J$ of $I$, let $$\M_J=\{J^nM\}_{n\geq 0},\text{ an $I$-good filtration of $M$ and }$$  $$R(\M_J)= \mathop{\oplus}\limits_{n\geq 0}^\infty J^nM, \text{ a module over } R(J).$$ 
We regard $R(\M)$ and $R(\M_J)$ as submodules of $M[t]$ over polynomial ring $A[t].$ Let $R(\M)_{\geq m}=\mathop{\oplus}\limits_{n\geq m}M_nt^n$ be the graded submodule of $R(\M).$ The unique homogeneous maximal ideals of $R(I)$ and $R(J)$ are denoted by $\U$ and $\xi$ respectively, i.e., $\U=\m R(I)+R(I)_+$ and $\xi = \m R(J)+R(J)_+.$ We write $$F_I(J)=R(J)/IR(J)=\mathop{\oplus}_{n\geq 0} J^n/IJ^n,$$ $$F_{I_1}(\M_J)=R(\M_J)/I_1R(\M_J)=\mathop{\oplus}\limits_{n\geq 0}J^nM/I_1J^nM \text{ and }$$ $$F_\m(\M_J)=R(\M_J)/\m R(\M_J)=\mathop{\oplus}\limits_{n\geq 0}J^nM/\m J^nM.$$

Let $\overline{A}=A/\h_{\m}^0(A)$, $W=\h_{\m}^0(M)$, and $\overline{M}=M/W$ which is an $\overline{A}$-module. Further, $\overline{\M}=\{ \overline{M_n}=(M_n+W)/W\}_{n\geq 0}$ and $\overline{\M_J}=\{(J^nM+W)/W\}_{n\geq 0}$  are $I\overline{A}$-good filtration of $\overline{M}$.
For  $a\in A$, we write $A^\prime=A/(a)$, $M^\prime=M/aM$, $J^\prime=JA^\prime$, $I^\prime=IA^\prime$ and  $\M^\prime=\{M_n^\prime=(M_n+aM)/aM\}_{n\geq 0}$ which is an $I^\prime$-good filtration of $M^\prime$.  

Let $J=(a_1,\ldots,a_d)$ be a minimal $\M$-reduction of $\I$. 
It follows that $\ell(M/JM)<\infty.$ We consider the following conditions on $a_1,\ldots,a_d$ for $M$: 
	\begin{itemize}
	\item[$(C_0)$] The sequence $a_1,\ldots,a_d$ is a $d$-sequence of $M$ as defined in \cite{adt};
	\item[$(C_1)$] The above sequence is an unconditioned strong d-sequence of $M$;
	\item[$(C_2)$]  $(a_1,\ldots, \check{a_i},\ldots,a_d)M:_Ma_i\subseteq IM$   for $1\leq i\leq d$.
	\item[$(C_3)$] $\depth  M>0$.
\end{itemize}
The above conditions were introduced in \cite{ozeki13} when $M=A$. We note that
all the four conditions $(C_0)$-$(C_3)$ are satisfied if $M$ is Cohen-Macaulay. The condition $(C_1)$ is equivalent to saying that $M$ is generalized Cohen-Macaulay and $J$ is a standard parameter ideal for $M$. The role played by regular sequences in the study of Cohen-Macaulay modules is broadly replaced
by $d$-sequences in the case of generalized Cohen-Macaulay modules. 
A sequence $a_1,\ldots,a_s$ of elements of $A$ is said to be a {\textit{d-sequence}} of $M$ if the equality 
\[q_{i-1}M:_M a_ia_j=q_{i-1}M:_M a_j\] holds for $1\leq i\leq j\leq s$ where 
$q_{i-1}=(a_1,\ldots,a_{i-1})$ and $q_0=(0)$. It is said to be an {\textit{unconditioned d-sequence}} if it is a $d$-sequence in any order. Moreover, we say that $a_1,\ldots,a_s$ is an {\textit{unconditioned strong $d$-sequence}} (u.s.d-sequence) if $a_1^{n_1},\ldots,a_s^{n_s}$ is an  unconditioned $d$-sequence for  all integers $n_1,\ldots,n_s>0$. 
Recall that a module is Buchsbaum if and only if every system of parameters of $M$ is u.s.d-sequence of $M$. So,  $(C_0)$ and $(C_1)$ are satisfied by every system of parameters of $M$ if $M$ is a Buchsbaum module.

\begin{remark}\label{remark-nota}
Suppose  $(C_1)$ is satisfied by $(a_1,\ldots,a_d)$ for $M$. Then $a_i W\subseteq JW\subseteq JM\cap W=0$,  see \cite[Corollary 2.3]{trung-gcm}. This gives 
$ W\subseteq (a_1,\ldots, \check{a_i},\ldots,a_d)M:_Ma_i$ for $1\leq i\leq d.$
\end{remark}

By $\mu(M)$, we denote the minimal number of generators of $M$ and $e_i(\M)$, $1\leq i\leq d$ are the Hilbert coefficients of $\M$, i.e., the unique integers such that $\ell(M/M_n)=\sum\limits_{i=0}^d(-1)^ie_i(\M)\binom{n+d-i-1}{d-i}$ for $n\gg 0.$ When $\M$ is the $I$-adic filtration $\{I^nM\}_{n\geq 0}$, we write $e_i^I(M)$ in place of $e_i(\M)$. 

We assume that the residue field $A/\m$ is infinite. For convenience, we highlight that for most of our results, as in \cite{rossi-valla}, $\M$ is either $$\M_\I=\{I_nM\}_{n\geq 0}\text{ or }$$ $$\M_L=\{M_0=M, ~M_{n+1}=I^nL ~\text{ for } ~n\geq 0\} \text{ where } L=\m M 
\text{ and }$$
through out the paper, $J=(a_1,\ldots,a_d)\subseteq I$ denote either a minimal $\M_\I$-reduction of $I$ or a minimal $\M_L$-reduction of $I$ which will be clear from the context.

\section{Preliminaries}\label{Section-Pre}
In this section, we present some auxiliary results that will be used in subsequent sections. We discuss most of the proofs for the reader's convenience and mention the references for $I$-adic cases of some results.
\subsection{Sally module of filtration}
To study blow-up algebras, Vasconcelos \cite{Vas94} introduced the notion of Sally modules in the ring case. 
Further, Rossi-Valla in \cite[Chapter 6]{rossi-valla}, extended the definition for an $I$-good filtration of a module which we recall below. The Sally module, $S_J(\M)$ corresponding to the filtration $\M_\I$ or $\M_L$ can be defined as the cokernel of the following maps of graded $R(J)$-modules respectively:
\begin{align}\label{eq_1}
 &0\longrightarrow I_1R(\M_J)\longrightarrow \R(\M)_{+}(+1)\longrightarrow S_J(\M)\longrightarrow 0 \\
 &0\longrightarrow \m R(\M_J)\longrightarrow \R(\M)_{+}(+1)\longrightarrow S_J(\M)\longrightarrow 0
\end{align}
where $J=(a_1,\ldots,a_d)\subseteq I$ is a minimal $\M_\I$-reduction or $\M_L$-reduction  of $I$ respectively. Therefore, for both the filtration, Sally module is a finitely generated graded $R(J)$-module and $\dim_{R(J)} S_J(\M)\leq d$.  

\begin{lemma}\label{lemma-3.1(a)} Let $M$ be a finitely generated $A$-module of dimension $d$.  Suppose $\M=\M_\I$ and $J\subsetneq I_1$ or $\M=\M_L$ and $J\subsetneq \m.$ 
Then there exists an exact sequence of graded $R(J)$-modules
$$
    R(\M_J)(-1)^\mu\xrightarrow{\phi}R(\mathcal{M})/ R(\M_J)\to S_J(\M)(-1)\to 0
    $$ 
    where  $\mu=\mu(\m \setminus J)$ when $\M=\M_L$ and  $\mu=\mu(I_1\setminus J)$ when $\M=\M_\I.$ 
\end{lemma}
\begin{proof}
 Suppose $\M=\M_L$ and  $\m =J+(y_1,\ldots,y_\mu)$, where $y_1,\ldots,y_\mu \in \m$. Consider the graded map 
	$$R(\M_J)(-1)^\mu\xrightarrow{\phi} R(\mathcal{M})/R(\M_J)$$
	defined as $\phi(\alpha_1,\ldots,\alpha_\mu )=\overline{\mathop\sum\limits_{i=1}^\mu y_i \alpha_i }\in R(\mathcal{M})/R(\M_J)$, where $\alpha_i \in J^{n-1}M $ and $y_1,\ldots,y_{\mu}\in \m $. Then
	$[\coker \phi]_{n}=\m I^{n-1}M/(J^nM+J^{n-1}\m M)=\m I^{n-1}M/J^{n-1}\m M=(S_J(\M)(-1))_{n}$ for $n\geq 1$ and
	$[\coker \phi]_{0}=0$. Therefore,
	$\coker \phi\cong S_J(\M)(-1)$ as graded $R(J)$-module.

    Now assume $\M=\M_\I$ and $I_1=J+(y_1,\ldots,y_\mu)$, where $y_1,\ldots,y_\mu \in I_1$. As earlier, define the graded map $R(\M_J)(-1)^\mu\xrightarrow{\phi} R(\mathcal{M})/R(\M_J)$ by  $\phi(\alpha_1,\ldots,\alpha_\mu )=\overline{\mathop\sum\limits_{i=1}^\mu y_i \alpha_i}$. Then $[\coker \phi]_{n}=I_{n}M/(J^nM+J^{n-1}I_1M)=I_{n}M/J^{n-1}I_1M=(S_J(\M)(-1))_{n}$ for $n\geq 1$ and
	$[\coker \phi]_{0}=0$ which gives 
	$\coker \phi\cong S_J(\M)(-1)$ as graded $R(J)$-module.
\end{proof}
\begin{lemma}\label{lemma-2.2(a)}
	Let $M$ be a finitely generated flat $A$-module of dimension $d>0$. Suppose $(C_0)$ and $(C_2)$ conditions are satisfied by the sequence $a_1,\ldots,a_d$ for both $A$ and $M$. Then 
    \begin{enumerate}
        \item $R(\M_J)/I_1R(\M_J)=\F_{I_1}(\M_J)\cong (M/I_1M)[X_1,\ldots,X_d]$ as graded $R(J)$-module,
        \item $R(J)/IR(J)=\F_I(J) \cong A/I[X_1,\ldots,X_d]$ as graded $A$-algebra and 
        \item $R(\M_J)/\m R(\M_J)=\F_{\m}(\M_J)\cong (M/\m M)[X_1,\ldots,X_d]$ as graded $R(J)$-module. 	
    \end{enumerate}  
	In particular, $\F_I(J)$ is Cohen-Macaulay ring of dimension $d$, $\F_{I_1}(\M_J)$ and  $\F_\m(\M_J)$ are Cohen-Macaulay $R(J)$-modules of dimension $d$.
\end{lemma}
\begin{proof}
    Since $M$ is a flat $A$-module, we have $R(\M_J)\cong R(J) \otimes_A M.$ By condition $(C_0)$, we get that $R(J)\cong Sym_A(J)$, where $Sym_A(J)$ denotes the symmetric algebra of $J$. Notice that 
\begin{eqnarray*}
\frac{R(\M_J)}{I_1R(\M_J)}\cong \frac{A}{I_1}\otimes_A R(J)\otimes_A M &\cong&  \frac{A}{I_1} \otimes_A Sym_A(J)\otimes_A M\\
&\cong& Sym_{A/I_1}\Big(\frac{A}{I_1} \otimes_A J\Big)\otimes_A M\\
&=&Sym_{A/I_1}\Big(\frac{J}{I_1J}\Big)\otimes_A M \text{ as graded $A/I_1$-module and } 
\end{eqnarray*}
\begin{eqnarray*}
    \frac{R(\M_J)}{\m R(\M_J)}\cong \frac{A}{\m}\otimes_A R(J)\otimes_A M &\cong&  \frac{A}{\m} \otimes Sym_A(J)\otimes_A M\\
    &\cong& Sym_{A/\m}\Big(\frac{A}{\m} \otimes_A J\Big)\otimes_A M\\&=&Sym_{A/\m}\Big(\frac{J}{\m J}\Big)\otimes_A M  \text{ as graded $A/\m$-module. }
\end{eqnarray*}

By condition $(C_2)$, $J/I_1J$ and $J/\m J$ are free  of rank $d$ over $A/I_1$ and $A/\m$ respectively. Hence $Sym_{A/I_1}(J/I_1J)\cong A/I_1[X_1,\ldots,X_d]$ and $Sym_{A/\m}(J/\m J)\cong A/\m[X_1,\ldots,X_d]$.   Therefore, we have 
\begin{align*}
R(\M_J)/I_1R(\M_J)&\cong A/I_1[x_1,\ldots,X_d]\otimes_A M \cong M/I_1M[X_1,\ldots, X_d] \text{ and }\\
R(\M_J)/\m R(\M_J)&\cong A/\m [x_1,\ldots,X_d]\otimes M \cong M/\m M[X_1,\ldots, X_d]\qedhere
\end{align*}
\end{proof}
\begin{lemma}\cite[Corollary 6.2]{rossi-valla}\label{lemma-2.3(a)}
    Let $M$ be a Cohen-Macaulay $A$-module of dimension $d$ and $L$ be a submodule of $M$. Then $S_J(\M_L)=0$ or $\dim S_J(\M_L)=d.$

\end{lemma}
To determine the dimension of $S_J(\M)$ when $\M=\M_{\I}$, we state the following lemma. 
 \begin{lemma}\label{lemma-2.3(b)} Suppose $\depth A>0$, $M$ is a finitely generated faithful $A$-module of dimension $d$ and $\depth M>0.$ Suppose that the conditions $(C_0)$ and $(C_2)$  are satisfied by $a_1,\ldots,a_d$ for both $A$ and $M$.  If $R(\M_J)$ satisfies Serre's condition $(S_2)$, then $\ass_{R(J)}(S_J(\M_\I))\subseteq \{\m R(J)\}$. Consequently, $\dim_{R(J)}(S_J(\M_\I))=d$ if $S_J(\M_\I)\neq 0$.
 \end{lemma}
 \begin{proof}
 	Let $P\in\ass_{R(J)}(S_J(\M_\I))$. Since $\m^l S_J(\M_\I)=0$ for $l\gg 0$, we have $\p=\m R(J)\subseteq P.$ Suppose $\p\neq P$. 
 	Note that $\depth_{R(J)_P} (R(\M_\I)_{+}(+1))_P\geq 1$
 	since $a_1\in \p\subseteq P$
 	is a non-zero-divisor on $R(\M_\I)_{+}(+1)=\mathop\oplus\limits_{j\geq 1}I_jMt^{j-1}$. Applying the depth Lemma on the exact sequence
 	$$0\longrightarrow (I_1 R(\M_J))_P\longrightarrow (R(\M_\I)_{+}(+1))_P\longrightarrow (S_J(\M_\I))_P\longrightarrow 0,$$
 	of graded $R(J)_P$-modules. We get $$\depth_{R(J)_P} (I_1 R(\M_J))_P\geq \min\{ \depth_{R(J)_P} (R(\M_\I)_{+}(+1))_P,\depth_{R(J)_P} (S_J(\M_\I))_P+1\}\geq 1$$ and 
 	$\depth_{R(J)_P} (S_J(\M_\I))_P=0\geq \min\{\depth_{R(J)_P} (R(\M_\I)_{+}(+1))_P,\depth_{R(J)_P} (I_1 R(\M_J))_P-1\}=\depth_{R(J)_P} (I_1 R(\M_J))_P-1$. Hence $\depth_{R(J)_P} (I_1 R(\M_J))_P=1$.
 	Now consider the exact sequence
 	$$0\longrightarrow (I_1 R(\M_J))_P\longrightarrow R(\M_J)_P\longrightarrow (R(\M_J)/I_1 R(\M_J))_P\longrightarrow 0.$$
 	Since $\p\neq P$, $\height~P\geq 2$.
 	So $\dim R(J)_P\geq 2$. Since $M$ is faithful $A$-module, hence $R(\M_J)$ is faithful $R(J)$-module. Therefore $\dim R(\M_J)_P\geq 2$ which implies that $\depth R(\M_J)_p\geq \min\{2,\dim R(\M_J)_P\}=2$ as $R(\M_J)$ satisfies $(S_2)$. 
 	Again by depth lemma, $1=\depth_{R(J)_P} (I_1 R(\M_J))_P\geq \min\{\depth_{R(J)_P} R(\M_J)_P,\depth_{R(J)_P} (R(\M_J)/I_1 R(\M_J))_P+1 \}=\depth_{R(J)_P} (R(\M_J)/I_1 R(\M_J))_P+1$ which implies
 	$\depth_{R(J)_P} (R(\M_J)/I_1 R(\M_J))_P=0.$ By Lemma \ref{lemma-2.2(a)}, $R(\M_J)/I_1 R(\M_J)$ is Cohen-Macaulay which means $\dim (R(\M_J)/I_1 R(\M_J))_P=\depth_{R(J)_P} (R(\M_J)/I_1 R(\M_J))_P=0.$ Thus 
 	$P\in\min_{R(J)}(R(\M_J)_P/I_1 R(\M_J)_P)=\{\m R(J)\}$ which is a contradiction.
 \end{proof}
The Hilbert coefficients $e_i(S_J(\M))$ of the Sally module $S_J(\M)$ and the coefficients $e_i(\M)$ of $\M$ are related through the following relations. Here $\M=\M_L$ or $\M=\M_\I.$  We have
\[\ell(M/M_{n+1})=\ell(M/J^n M_1)-\ell(S_J(\M)_n)= \ell(M/J^nM)+\ell(J^nM/J^n M_1)-\ell(S_J(\M)_n)\]
for $n\geq 0$. 
\begin{proposition}\label{proposition:p1} Let $M$ be a finitely generated faithful $A$-module and $\M=\M_L$ or $ \M=\M_\I.$  Then
	\begin{equation*}
		e_1(\M)\geq  \begin{cases} e_1^J(M)+e_0^J(M)-\ell_A(M/M_1)+e_0(S_J(\M)) &\text{ if } \dim S_J(\M)=d.\\
			e_1^J(M)+e_0^J(M)-\ell_A(M/M_1) & \text{ if } \dim S_J(\M)\leq d-1.
		\end{cases}
	\end{equation*}
\end{proposition}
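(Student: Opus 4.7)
The plan is to compare Hilbert polynomials in the identity
$$\ell(A/I_{n+1}) \;=\; \ell(A/Q^n) + \ell(Q^n/Q^nI_1) - \ell(S_n),$$
which is precisely the combination of the two formulas displayed just above the proposition. For $n\gg 0$ all four terms are polynomial in $n$; the first two have degree $d$, and since $Q$ is a reduction of $\I$ their leading coefficients agree, being $e_0(\I)/d!$. The remaining two terms have degree at most $d-1$.

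The next step is to extract the coefficient of $\binom{n+d-1}{d-1}$ on each side. On the left this coefficient is $-e_1(\I)$. On the right, a routine Pascal-rule rewriting of $\ell(A/Q^n)=\sum_{i=0}^{d}(-1)^{i}e_i(Q)\binom{n+d-1-i}{d-i}$ into the basis $\{\binom{n+d-i}{d-i}\}$ shows that its $\binom{n+d-1}{d-1}$-coefficient equals $-(e_0(\I)+e_1(Q))$. Writing $\ell(Q^n/Q^nI_1)=c_0\binom{n+d-1}{d-1}+O(n^{d-2})$ and $\ell(S_n)=\epsilon\binom{n+d-1}{d-1}+O(n^{d-2})$, where $\epsilon=e_0(S)$ if $\dim S=d$ and $\epsilon=0$ otherwise, matching coefficients yields the equality
$$e_1(\I) \;=\; e_1(Q)+e_0(\I)+\epsilon-c_0.$$

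The only genuine estimate left is the bound $c_0\le\ell(A/I_1)$. For this I would use that $Q^n$ is generated, as an $A$-module, by the $\binom{n+d-1}{d-1}$ monomials of degree $n$ in $a_1,\ldots,a_d$; reducing modulo $I_1$ gives a surjection $(A/I_1)^{\binom{n+d-1}{d-1}}\twoheadrightarrow Q^n/Q^nI_1$, so $\ell(Q^n/Q^nI_1)\le \ell(A/I_1)\binom{n+d-1}{d-1}$ for every $n$ and in particular $c_0\le\ell(A/I_1)$. Substituting this into the displayed equality yields the inequality claimed in both cases. There is no real obstacle here; the main friction is bookkeeping, namely extracting the coefficient $-(e_0(\I)+e_1(Q))$ correctly from the Pascal rearrangement of $\ell(A/Q^n)$ and being careful to split the $(d-1)$-coefficient of $\ell(S_n)$ into the two dimension cases.
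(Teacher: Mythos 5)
Your proposal is correct and follows essentially the same route as the paper: both start from the identity $\ell(A/I_{n+1})=\ell(A/Q^n)+\ell(Q^n/Q^nI_1)-\ell(S_n)$, rewrite $\ell(A/Q^n)$ by Pascal's rule to read off the coefficient $-(e_0(\I)+e_1(Q))$, bound $\ell(Q^n/Q^nI_1)$ by $\ell(A/I_1)\binom{n+d-1}{d-1}$ (the paper via the surjection $(A/I_1)[X_1,\ldots,X_d]\twoheadrightarrow T/I_1T$, you via the equivalent surjection $(A/I_1)^{\binom{n+d-1}{d-1}}\twoheadrightarrow Q^n/Q^nI_1$), and compare coefficients of $\binom{n+d-1}{d-1}$, splitting into the cases $\dim S=d$ and $\dim S\leq d-1$. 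No gaps.
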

\begin{proof}
	For $n\geq 0$, when $M_1=I_1M$, we have $J^nM/J^n M_1= (R(\M_J)/I_1 R(\M_J))_n$ and $R(\M_J)/I_1 R(\M_J)$ is a homomorphic image of $M/M_1[X_1,\ldots,X_d]$ or when $M_1=\m M$, we have $J^nM/J^n M_1= (R(\M_J)/\m R(\M_J))_n$ and $R(\M_J)/\m R(\M_J)$ is a homomorphic image of $M/M_1[X_1,\ldots,X_d]$. So for all $n\geq 0$, 
	\begin{equation}
		\ell(J^nM/J^{n} M_1)\leq \ell(M/M_1)\binom{n+d-1}{d-1}.\label{equation:step-hilbFunfibercone}
	\end{equation}
	There exists an integer $N$ such that for all $n\geq N$,
	\begin{align*}
		\ell(M/J^nM)&=\mathop\sum\limits_{i=0}^d (-1)^ie_i^J(M)\binom{n-1+d-i}{d-i}\\
		&=e_0^J(M)\binom{n+d}{d}-(e_0^J(M)+e_1^J(M))\binom{n+d-1}{d-1}\\
		&\qquad+\mathop\sum\limits_{i=2}^d(-1)^i(e_{i-1}^J(M)+e_i^J(M))\binom{n+d-i}{d-i}.
	\end{align*}
	Thus, for all $n\geq N$,
	\begin{align*}
		\begin{split}
			\ell(M/M_{n+1})+\ell(S_J(\M)_n)&=\ell(M/J^nM)+\ell(J^nM/J^n M_1)\\
			&\leq e_0^J(M)\binom{n+d}{d}-(e_0^J(M)+e_1^J(M))\binom{n+d-1}{d-1}+\mathop\sum\limits_{i=2}^d(-1)^i\\
			&\qquad\big(e_{i-1}^J(M)+ e_i^J(M)\big)\binom{n+d-i}{d-i}+\ell(M/M_1)\binom{n+d-1}{d-1}\\
			&\leq e_0^J(M)\binom{n+d}{d}-\Big\{ e_0^J(M)+e_1^J(M)-\ell(M/M_1)\Big\}\binom{n+d-1}{d-1}\\
			&\qquad+\mathop\sum\limits_{i=2}^d(-1)^i(e_{i-1}^J(M)+e_i^J(M))\binom{n+d-i}{d-i} 
		\end{split}
	\end{align*}
	Now, the conclusion  follows on comparing the coefficients.
\end{proof}
\begin{proposition}\label{proposition-2.4}    Let $M$ be a finitely generated faithful flat $A$-module and $\M=\M_L$ or $\M=\M_\I.$ Suppose $(C_0)$ and $(C_2)$ conditions are satisfied by $a_1,\ldots,a_d$ for both $A$ and $M$. Then,  for all $n\geq 0,$ 
	\begin{align}\label{newlabel-eq-1}
		\begin{split}
			\ell_A(M/M_{n+1})&= e_0^J(M)\binom{n+d}{d}-\Big\{ e_0^J(M)+e_1^J(M)-\ell(M/M_1)\Big\}\binom{n+d-1}{d-1}\\
			&\qquad+\mathop\sum\limits_{i=2}^d(-1)^i(e_{i-1}^J(M)+e_i^J(M))\binom{n+d-i}{d-i}-\ell_A(S_J(\M)_{n}).
		\end{split}
	\end{align}
\end{proposition}
\begin{proof} Suppose $(C_0)$ and $(C_2)$ condition are satisfied by $a_1,\ldots,a_d$ for both $A$ and $M$. Then by \cite[Proposition 4.1]{trung-abs}, $N=0$ in the proof of Proposition \ref{proposition:p1}. We have $J^nM/J^n M_1= (R(\M_J)/I_1 R(\M_J))_n$ when $\M=\M_\I$ and $J^nM/J^n M_1= (R(\M_J)/\m R(\M_J))_n$ when $\M=\M_L$. By Lemma \ref{lemma-2.2(a)}, it is isomorphic to $M/M_1[X_1,\ldots,X_d]$.  Hence, equality holds in
	\eqref{equation:step-hilbFunfibercone} for all $n\geq 0.$
\end{proof}
In \eqref{newlabel-eq-1}, we may write
$$\ell_A((S_J(\M)_n)=e_0(S_J(\M))\binom{n+s-1}{s-1}-e_1(S_J(\M))\binom{n+s-2}{s-2}+\ldots+(-1)^{s-1}e_{s-1}(S_J(\M))$$
for $n\gg 0$ where $s=\dim S_J(\M)$. Then, comparing the coefficients of both sides, we get the following corollary.
\begin{corollary}\label{corr-2.5}
Let the hypothesis be the same as in Proposition \ref{proposition-2.4}. Let $\dim S_J(\M)=s$. 
	\begin{enumerate}[label=(\roman*)]
		\item Suppose $s=d$. Then 
		\begin{enumerate}
			\item $e_1(\M)=e_0^J(M)+e_1^J(M)-\ell(M/M_1)+e_0(S_J(\M))$ and 
			\item $e_i(\M)=e_{i-1}^J(M)+e_i^J(M)+e_{i-1}(S_J(\M))$ for $2\leq i\leq d$.
		\end{enumerate}
		\item Suppose $s<d$. Then 
		\begin{enumerate}
			\item $e_1(\M)=e_0^J(M)+e_1^J(M)-\ell(M/M_1)$,
			\item $e_i(\M)=e_{i-1}^J(M)+e_i^J(M)$ for $2\leq i\leq d-s$ and
			\item $e_i(\M)=e_{i-1}^J(M)+e_i^J(M)+(-1)^{d-s}e_{i-d+s-1}(S_J(\M))$ for all $d-s+1\leq i\leq d.$
		\end{enumerate}
	\end{enumerate}
\end{corollary}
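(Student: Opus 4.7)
The plan is to equate the two asymptotic expressions for $\ell_A(A/I_{n+1})$, namely the formula furnished by Proposition \ref{proposition-2.4} and the standard Hilbert polynomial
\[
\ell_A(A/I_{n+1})=\sum_{i=0}^d(-1)^i e_i(\I)\binom{n+d-i}{d-i}\quad (n\gg 0),
\]
and then compare coefficients in the binomial basis $\{\binom{n+d-i}{d-i}\}_{0\le i\le d}$. The hypotheses $(C_0)$ and $(C_2)$ ensure Proposition \ref{proposition-2.4} applies, so everything reduces to a bookkeeping exercise once the Hilbert polynomial of the Sally module is rewritten in the correct basis.

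First I would express $\ell_A(S_n)$, for $n\gg 0$, in the basis indexed by $i$ rather than by its own degree. Starting from
\[
\ell_A(S_n)=\sum_{j=0}^{s-1}(-1)^j e_j(S)\binom{n+s-1-j}{s-1-j},
\]
I substitute $i=d-s+1+j$, so that $\binom{n+s-1-j}{s-1-j}=\binom{n+d-i}{d-i}$ and $j=i-(d-s+1)$. This gives
\[
-\ell_A(S_n)=(-1)^{d-s}\sum_{i=d-s+1}^{d}(-1)^i e_{i-d+s-1}(S)\binom{n+d-i}{d-i},
\]
after absorbing the sign $(-1)^{-(d-s+1)}=-(-1)^{d-s}$. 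Plugging this into the formula of Proposition \ref{proposition-2.4} yields an expansion of $\ell_A(A/I_{n+1})$ entirely in the basis $\binom{n+d-i}{d-i}$.

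Next I would match coefficients against $\sum_{i=0}^d(-1)^i e_i(\I)\binom{n+d-i}{d-i}$. The $i=0$ and $i=1$ coefficients give $e_0(\I)=e_0(I)$ and a formula for $e_1(\I)$ whose form depends on whether the Sally contribution starts at $i=1$ (i.e.\ $s=d$) or later. Concretely, in the case $s=d$ the range $d-s+1\le i\le d$ is $1\le i\le d$, and the Sally term contributes $e_{i-1}(S)$ to every $e_i(\I)$ for $1\le i\le d$, producing (1)(a) and (1)(b). In the case $s<d$ the Sally term is silent for $1\le i\le d-s$, so $e_i(\I)=e_{i-1}(Q)+e_i(Q)$ in that range (part (2)(b)), while for $d-s+1\le i\le d$ one reads off part (2)(c) including the sign $(-1)^{d-s}$. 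The $i=1$ coefficient gives part (2)(a) since $d-s+1\ge 2$.

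The only real obstacle is the sign and index bookkeeping in the re-indexing of $\ell_A(S_n)$ into the target basis; everything else is a coefficient-by-coefficient identification. In particular, the appearance of $(-1)^{d-s}$ in (2)(c) is exactly the sign produced by shifting the summation index from $j$ (starting at $0$) to $i$ (starting at $d-s+1$), combined with the leading minus sign on $\ell_A(S_n)$ in Proposition \ref{proposition-2.4}. Handling the degenerate endpoint cases ($d-s+1=2$ when $s=d-1$, and $d-s=1$ when there is no $i$ in the intermediate range $[2,d-s]$) requires a small consistency check but no new ideas.
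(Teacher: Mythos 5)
Your proposal is correct and is essentially the paper's own argument: the paper likewise substitutes the Hilbert polynomial of $S$ into the formula of Proposition \ref{proposition-2.4} and compares coefficients in the binomial basis, and your re-indexing $i=d-s+1+j$ with the resulting sign $(-1)^{d-s}$ just makes that bookkeeping explicit.
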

\subsection{Reduction steps} In the theory of Hilbert polynomials, it is very useful to reduce a result (i) to the case when $\depth M>0$ and (ii) to a lower-dimensional case for applying induction. The following results allow us to do both of these reductions in most of our results.

\begin{lemma}\label{lemma-reduction-steps}
	\begin{enumerate}[label=(\roman*)]
		\item  If $(C_1)$ and $(C_2)$ conditions are satisfied by $a_1,\ldots,a_d$ for $M$, then $(C_1)$ and $(C_2)$ conditions are satisfied by $a_1,\ldots,a_d$ for $\overline{M}$. 
		
		\item \label{lemma-4.1} Suppose that $d\geq 2$. Let $a\in J\setminus \m J$ be a superficial element for $\M$ and $\M_J$.  Then  $2e_0(\M)-e_1(\M)+e_1^J(M)=2\ell_A(M/M_1)+\ell_A(M_1/(M_2+JM))$ if and only if
		$2e_0(\M^\prime)-e_1(\M^\prime)+e_1^J(M^\prime)=2\ell_{A}(M^\prime/{M}^\prime_1)+\ell_{A}({M}^\prime_1/({M}^\prime_2+JM^\prime)).$
		
		\item \label{lemma-4.2} $2e_0(\M)-e_1(\M)+e_1^J(M)=2\ell_A(M/M_1)+\ell_A(M_1/(M_2+JM))$ if and only if  
		$2e_0(\overline{\M})-e_1(\overline{\M})+e_1^J(\overline{M})=2\ell_A(\overline{M}/\overline{M_1} )+\ell_A(\overline{M_1} /(\overline{M_2} +J\overline{M}))$ and $W\subseteq M_2+JM$.
	\end{enumerate}
\end{lemma}
\begin{proof}
	\begin{enumerate}[label=(\roman*)]
		\item
		Suppose $a_1,\ldots,a_d$ satisfies  $(C_1)$ for $M$. Then it follows from \cite[Theorem 2.1]{trung-gcm} and \cite[Lemma 1.6]{trung-gcm} that $\overline{M}$ is generalized Cohen-Macaulay with $J\overline{A}$ a standard parameter ideal in $\overline{M}$ and $JM\cap W=0$ from \cite[Corollary 2.3]{trung-gcm}. Now let  $(a_1,\ldots,\check{a_i},\ldots,a_d)M:_M a_i\subseteq IM$ and $r+W\in (a_1,\ldots,\check{a_i},\ldots,a_d)\overline{M}:_{\overline{M}} a_i$.  Then $ra_i\in (a_1,\ldots,\check{a_i},\ldots,a_d)M+W$. Suppose $ra_i=s+w_0$ with $s\in (a_1,\ldots,\check{a_i},\ldots,a_d)M$ and $w_0\in W$. Then
		$w_0\in JM\cap W=0$ which implies that $ r\in (a_1,\ldots,\check{a_i},\ldots,a_d)M: a_i\subseteq IM.$ Therefore,   $(a_1,\ldots,\check{a_i},\ldots,a_d)\overline{M}:_{\overline{M}} a_i \subseteq I\overline{M}.$
        \vspace{0.25cm}
        
		\item We have $e_i(\M)=e_i(\M^\prime)$, $e_i^J(M)=e_i^J(M^\prime)$ for $0\leq i\leq d-2$, $e_{d-1}(\M)=e_{d-1}(\M^\prime)+(-1)^d\ell(0:_M a)$ and $e_{d-1}^J(M)=e_{d-1}^J(M^\prime)+(-1)^d\ell(0:_M a)$. Therefore, $2e_0(\M)-e_1(\M)+e_1^J(M)= 2e_0(\M^\prime)-e_1(\M^\prime)+e_1^J(M^\prime)$. Further, $2\ell_A(M/M_1)+\ell_A(M_1/(M_2+JM))=2\ell_{A}(M^\prime/M_1^\prime)+\ell_{A}(M_1^\prime/(M_2^\prime+JM^\prime))$.

         \vspace{0.25cm}
		\item 
		We have $e_i(\M)=e_i(\overline{\M})$ and $e_i^J(M)=e_i^J(\overline{M})$ for $0\leq i\leq d-1$, $e_d(\M)=e_d(\overline{\M})+(-1)^d\ell_A(W)$ and $e_d^J(M)=e_d^J(\overline{M})+(-1)^d\ell_A(W)$. Thus,
		\begin{align*}
			2e_0(\M)-e_1(\M)+e_1^J(M)&= 2e_0(\overline{\M})-e_1(\overline{\M})+e_1^J(\overline{M})\\
			&\leq 2\ell_A(\overline{M}/\overline{M_1})+\ell_A(\overline{M_1}/(\overline{M_2}+J\overline{M})) \mbox{ by \eqref {very-1st-eqn(a)}}\\
			&\leq 2\ell_A(M/M_1)+\ell_A(M_1/(M_2+JM)).
		\end{align*}
		Therefore, $2e_0(\M)-e_1(\M)+e_1^J(M)= 2\ell_A(M/M_1)+\ell_A(M_1/(M_2+JM))$ if and only if $2e_0(\overline{\M})-e_1(\overline{\M})+e_1^J(\overline{M})=2\ell_A(\overline{M}/\overline{M_1})+\ell_A(\overline{M_1}/(\overline{M_2}+J\overline{M})),$ $\ell_A(\overline{M_1}/(\overline{M_2}+J\overline{M}))=\ell_A(M_1/(M_2+JM))$  and $\ell_A(\overline{M}/\overline{M_1})=\ell_A(M/M_1)$. 
		Note that the last two equalities hold if and only if $W\subseteq M_2+JM$. \qedhere
	\end{enumerate}
\end{proof}
We end this section with the following remark. 
\begin{remark}\label{remark-I1=Q-case}
	Suppose $e_1(\M)-e_1^J(M)=2(e_0^J(M)-\ell(M/JM)).$ Then the following conditions hold
    :
	\begin{enumerate}[label=(\roman*)]
		\item  $e_1(\M)=e_1^J(M)$,
		\item $M$ is Cohen-Macaulay $A$-module and 
		\item $\M=\M_J.$
	\end{enumerate}
\end{remark}
\begin{proof}
	We have $e_0(\M)-\ell(M/M_1)\leq e_1(\M)-e_1^J(M)=2(e_0^J(M)-\ell(M/JM))$ which gives 
	$e_0^J(M)-2\ell(M/JM)\geq -\ell(M/M_1)\Rightarrow e_0^J(M)-\ell(M/JM)\geq \ell(M_1/JM)\geq 0.$ On the other hand, $e_0^J(M)\leq \ell(M/JM)$. Therefore, we get $e_0^J(M)=\ell(M/JM)$. 
	Thus $M$ is Cohen-Macaulay $A$-module, see \cite[Corollary 4.7.11]{bruns-herzog} and $e_1(\M)=e_1^J(M)=0.$ By \cite[Theorem 2.7]{rossi-valla} $G(\M)$ is Cohen-Macaulay $A$-module and $M_n\subseteq JM$ for all $n.$ Hence by Valabrega-Valla criteria $M_n=M_n\cap JM= JM_{n-1}$ for all $n\geq 1.$ This gives that $\M=\M_J.$
\end{proof}
\section{\texorpdfstring
{The boundary condition of inequality \eqref{very-1st-eqn(a)}}
{The boundary condition of inequality (1)}
}\label{Section-inequality}
The main result of this section is Theorem \ref{lemma-3.3} which provides a way to discuss necessary conditions for the equality 
 $ e_1(\M)-e_1^J(M)= e_0^J(M)-  2\ell_A(M/M_1)-\ell_A(M_1/(M_2+JM))$
in case of filtrations $\M=\M_L$ and $\M=\M_\I$, see Corollary \ref{corollary-3.6}. 

Now onwards, we restrict ourselves to write $\M=\{M_n\}_{n\geq 0}$ for either the filtration $\M_\I$ or $\M_L$. 
For a minimal $\M$-reduction $J$ of $I$ , we consider the canonical graded $R(J)$-module homomorphisms: $$\F_{I_1}(\M_J)\to G(\M_\I)=R(\M_\I)/\R(\M_\I)_{+}(1) \text{ and }$$  $$\F_{\m}(\M_J)\to G(\M_L)=R(\M_L)/\R(\M_L)_{+}(1)$$ which give  exact sequences 
	\begin{eqnarray} \label{equation:step1}
		0\to K^{(2)}(\M_\I)\to \F_{I_1}(\M_J)\to G(\M_\I)\to R(\M_\I)/(R(\M_\I)_{+}(1)+R(\M_J))\to 0 \text { and }\\
       \label{equation:step1(a)} 0\to K^{(2)}(\M_L)\to \F_{m}(\M_J)\to G(\M_L)\to R(\M_L)/(R(\M_L)_{+}(1)+R(\M_J))\to 0
	\end{eqnarray}    
	of graded $R(J)$-modules where $K^{(2)}(\M_\I)$ and $K^{(2)}(\M_L)$ denote the respective kernels.  Now, let \begin{equation}
    \mu= \begin{cases}
\mu(I_1\setminus J) & \mbox{ if } \M=\M_\I \\  
\mu(\m/J) & \mbox{ if } \M=\M_L 
    \end{cases}
    \end{equation}
and $y_1,\ldots,y_{\mu}$, elements of $A$, such that $I_1=J+(y_1,\ldots,y_\mu)$ when $\M=\M_\I$ and $\m=J+(y_1,\ldots,y_\mu)$ when $\M=\M_L$.  Consider the graded homomorphism of graded $R(J)$-modules $$((R(\M_J)/IR(\M_J))(-1))^\mu\xrightarrow{\phi} R(\M)/(R(\M)_{+}(1)+R(\M_J))$$ defined as
	$$\phi(\alpha_1,\ldots,\alpha_\mu )=\overline{\mathop\sum_{i=1}^\mu y_i\alpha_i  }\in R(\M)/(R(\M)_{+}(1)+R(\M_J))$$
    where $\alpha_i\in R(\M_J)/I R(\M_J)$ for $1\leq i\leq \mu.$ Then 
$[\coker \phi]_n=\frac{M_n/(M_{n+1}+J^nM)}{( J^{n-1}M_1+M_{n+1})/(M_{n+1}+J^nM)}\cong M_n/(M_{n+1}+ J^{n-1}M_1)=(S_J(\M)/\L(\M))_{n-1}$ where $\L(\M)=\mathop\oplus\limits_{n\geq 1}\frac{M_{n+2}+J^{n}M_1}{ J^{n}M_1}$.  Thus 
	$\coker\phi\cong (S_J(\M)/\L(\M))(-1)$ and we get the exact sequence
	\begin{equation}\label{equation:step2}
    \begin{aligned}
        0\to \ker\phi&\to ((R(\M_J)/IR(\M_J)) (-1))^\mu\xrightarrow{\phi} R(\M)/(R(\M)_+(1)+R(\M_J))\\
        &\to (S_J(\M)/\L(\M))(-1)\to 0.
    \end{aligned}
	\end{equation}
Now we state the main theorem of this section. For the proof, we borrow the ideas from \cite{ozeki13}.  
 \begin{theorem}\label{lemma-3.3} 
	 Let $M$ be a finitely generated faithful flat $A$-module, $\M=\{M_n\}_{n\geq 0}$ be one of  the filtration $\M_\I$ or $\M_L$  and $J=(a_1,\ldots,a_d)$ be a minimal $\M$-reduction of $I$. We assume $J\subsetneq I_1$ in case of $\M=\M_\I$ and $J\subsetneq \m$  in case of $\M=\M_L$. Suppose the conditions $(C_0)$ and $(C_2)$ are satisfied by $a_1,\ldots,a_d$ for both $A$ and $M$. Then 
    \begin{align*}		\ell_A(M_n/M_{n+1})&=\Big \{\ell_A(M/M_1)+\ell_A(M_1/(M_2+JM))\Big\}\binom{n+d-1}{d-1}
		-\ell(M_1/({M_2}+JM))\\&\qquad \binom{n+d-2}{d-2}
		 +\ell_A(S_J(\M)_{n-1})-\ell_A([K^{(1)}(\M)]_{n})-\ell_A([K^{(2)}(\M)]_{n})-\\&\qquad\ell_A([K^{(3)}(\M)]_{n-1})
	\end{align*}
	for all $n\geq 0$ and for some   graded $R(J)$-modules $K^{(i)}(\M)$, $1\leq i\leq 3$. Furthermore, 
	$$
		e_1(\M)-e_1^J(M)= 2 e_0^J(M)-  2\ell_A(M/M_1)-\ell_A(M_1/(M_2+JM))+\mathop\sum\limits_{i\in\Gamma}e_0(K^{(i)}(\M))
	$$
    where $\Gamma=\{i\mid 1\leq i\leq 3, ~\dim_{R(J)} K^{(i)}(\M)=d\}.$
\end{theorem}
\begin{proof}
First, we consider the following exact sequence induced from \eqref{equation:step2},
	$$0\to [\ker\phi]_1\to {[(R(\M_J)/IR(\M_J))}^\mu]_0\to [R(\M)/(R(\M)_+(1)+R(\M_J))]_1\to 0$$
	and tensor it with $\F_I(J)$ to get the following commutative diagram:
	{\tiny\[
	\xymatrixrowsep{20mm}\xymatrixcolsep{3mm}
	\xymatrix{
		& ([\ker \phi]_1\otimes \F_I(J))(-1) \ar[r] \ar[d]_{\psi_1}  & ({[(R(\M_J)/IR(\M_J))}^\mu]_0\otimes \F_I(J))(-1) \ar[r] \ar[d]^{\psi_2} & ([R(\M)/(R(\M)_+(1)+R(\M_J))]_1 \otimes \F_I(J))(-1) \ar[r]\ar[d]^{\psi_3} & 0 \\
		0 \ar[r] & \ker \phi\ar[r]^{f}  &  [{(R(\M_J)/IR(\M_J)(-1))}^\mu] \ar[r]  & \im{\phi} \ar[r] & 0 
	}
	\]
    }
	of graded $R(J)$-modules where $\psi_2$ is bijective. So, we have an exact sequence \begin{equation}\label{equation:step3}
		0\to K^{(1)}(\M)\to ([R(\M)/(R(\M)_+(1)+R(\M_J))]_1\otimes\F_I(J))(-1)\xrightarrow{\psi_3}\im  \phi\to 0
	\end{equation}
	of graded $R(J)$-modules where $K^{(1)}(\M)=\ker\psi_3$. We put $K^{(3)}(\M)=\L(\M)$ for convenience of notation. Note that $K^{(1)}(\M)$ is a submodule of $ ([R(\M)/(R(\M)_+(1)+R(\M_J))]_1\otimes \F_I(J))(-1)$, $K^{(2)}(\M)$ is a submodule of $ \F_{I_1}(\M_J)$ or $\F_{\m}(\M_J)$  and $K^{(3)}(\M)$ is a submodule of $ S_J(\M)$. So, $\dim K^{(i)}(\M)\leq d$ for $i=1,2,3$.   

By Lemma \ref{lemma-2.2(a)},   $\F_{I_1}(\M_J)\cong M/I_1M[X_1,\ldots,X_d]$ and $\F_{\m}(\M_J)\cong M/\m M[X_1,\ldots,X_d].$ Therefore,  $\ell_A([\F_{I_1}(\M_J)]_n)$ and $\ell_A([\F_{\m}(\M_J)]_n)$ are equal to  $\ell_A(M/M_1)\binom{n+d-1}{d-1}$ for all $n\geq 0.$

	From \eqref{equation:step1}, \eqref{equation:step1(a)}, \eqref{equation:step2} and \eqref{equation:step3}, we have that 
	\begin{align}
		\ell_A(G(\M)_n)&=\ell_A\Big(\frac{M}{M_1}\Big)\binom{n+d-1}{d-1}+\ell_A\Big(\Big[\frac{R(\M)}{R(\M)_+(1)+R(\M_J)}\Big]_n\Big)-\ell_A([K^{(2)}(\M)]_n)\nonumber\\
		&= \ell_A\Big(\frac{M}{M_1}\Big)\binom{n+d-1}{d-1}+\{\ell_A([\im \phi]_n)+\ell_A\Big(\Big[\frac{S_J(\M)}{\L(\M)}\Big]_{n-1}\Big)\}-\ell_A([K^{(2)}(\M)]_n)\nonumber\\
		&=\ell_A\Big(\frac{M}{M_1}\Big)\binom{n+d-1}{d-1}+\{ \ell_A\Big(\Big[\Big[\frac{R(\M)}{R(\M)_+(1)+R(\M_J)}\Big]_1\otimes \F_I(J)\Big]_{n-1}\Big)-\nonumber\\
		&\qquad\ell_A([K^{(1)}(\M)]_n)\} +\ell_A(S_J(\M)_{n-1})-\ell_A([K^{(3)}(\M)]_{n-1})-\ell_A([K^{(2)}(\M)]_n)\label{equation:step4}
	\end{align}
	for all $n\geq 0$. Since  $ \F_I(J) = R(J)/IR(J) \cong A/I[X_1,\ldots,X_d]$ and 
	$[R(\M)/(R(\M)_+(1)+R(\M_J))]_1=M_1/(M_2+JM)$, we have that 
	\begin{align*}
		\ell_A\Big(\Big[\Big[\frac{R(\M)}{R(\M)_+(1)+R(\M_J)}\Big]_1\otimes \F_I(J)\Big]_{n-1}\Big)&= \ell_A\Big(\frac{M_1}{M_2+JM}\Big)\binom{n-1+d-1}{d-1}\\
		&= \ell_A\Big(\frac{M_1}{M_2+JM}\Big)
		\Big(\binom{n+d-1}{d-1}-\binom{n+d-2}{d-2}\Big).		
	\end{align*}
    Thus,
	\begin{align*}
		\ell_A\Big(\frac{M_n}{M_{n+1}}\Big)&=\ell_A\Big(\frac{M}{M_1}\Big)\binom{n+d-1}{d-1}+\ell_A\Big(\frac{M_1}{M_2+JM}\Big)\binom{n+d-2}{d-1}-\ell_A([K^{(1)}(\M)]_n)\nonumber\\
		&\qquad + \ell_A(S_J(\M)_{n-1})-\ell_A([K^{(3)}(\M)]_{n-1})-\ell_A([K^{(2)}(\M)]_n)\\
		&= \ell_A\Big(\frac{M}{M_1}\Big)\binom{n+d-1}{d-1}+\ell_A\Big(\frac{M_1}{M_2+JM}\Big)\Big\{\binom{n+d-1}{d-1}-\binom{n+d-2}{d-2}\Big\}\\
		&\qquad + \ell_A(S_J(\M)_{n-1})-\ell_A([K^{(1)}(\M)]_n)-\ell_A([K^{(3)}(\M)]_{n-1})-\ell_A([K^{(2)}(\M)]_n)\\
		&=\Big\{ \ell_A\Big(\frac{M}{M_1}\Big)+ \ell_A\Big(\frac{M_1}{M_2+JM}\Big)\Big\}
        \binom{n+d-1}{d-1}-\ell_A\Big(\frac{M_1}{M_2+JM}\Big)\binom{n+d-2}{d-2}\\
		&\qquad +\ell_A(S_J(\M)_{n-1})-\ell_A([K^{(1)}(\M)]_n)-\ell_A([K^{(3)}(\M)]_{n-1})-\ell_A([K^{(2)}(\M)]_n)
	\end{align*}
	for all $n\geq 0$. On comparing the coefficients, we get that
	\begin{align*} 
		e_0(\M)&=\begin{cases}
			\ell_A\Big(\frac{M}{M_1}\Big)+ \ell_A\Big(\frac{M_1}{M_2+JM}\Big)+e_0(S_J(\M))-\mathop\sum\limits_{i\in\Gamma} e_0(K^{(i)}(\M)) & \text{  if } \dim S_J(\M)=d\\        
			\ell_A\Big(\frac{M}{M_1}\Big)+ \ell_A\Big(\frac{M_1}{M_2+JM}\Big)-\mathop\sum\limits_{i\in\Gamma} e_0(K^{(i)}(\M)) & \text{  if } \dim S_J(\M)\leq d-1
		\end{cases}\\
		&= 2\ell_A\Big(\frac{M}{M_1}\Big)+ \ell_A\Big(\frac{M_1}{M_2+JM}\Big)+e_1(\mathcal{M})-e_1^J(M)-e_0(\M)-\mathop\sum\limits_{i\in\Gamma}e_0(K^{(i)}(\M))
	\end{align*}
	since $e_1(\M)-e_0(\M)-e_1^J(M)+\ell(M/M_1)=e_0(S_J(\M))  \text{ if } \dim S_J(\M)=d$ and
	$e_1(\M)-e_0(\M)-e_1^J(M)+\ell(M/M_1)=0  \text{ if } \dim S_J(\M)\leq d-1$
	using Corollary \ref{corr-2.5}. 
\end{proof}

In view of Lemma \ref{lemma-2.2(a)}, it is easy to observe that $\ass_{R(J)} K^{(i)}(\M)\subseteq \{\m R(J)\}$ for $1\leq i\leq 3.$ To see this, note that 
$K^{(1)}(\M)$ is embedded in $[R(\M)/(R(\M)_+(1)+R(\M_J))]_1\otimes \F_I(J) \cong (M_1/(M_2+JM))\otimes (A/I)[X_1,\ldots,X_d]$ which is a
maximal Cohen-Macaulay $\F_I(J)\cong A/I[X_1,\ldots,X_d]$-module, 
$K^{(2)}(\M)$ is embedded in $ \F^\prime\cong M/M_1[X_1,\ldots,X_d]$
and  $K^{(3)}(\M)$ is embedded in $ S_J(\M)$. Therefore, if $K^{(i)}(\M)\neq 0$, then $\dim K^{(i)}(\M)=d$ for 
$1\leq i\leq 3.$ We will need the following proposition later. Here note that $(C_1)\Rightarrow (C_0)$ and $(C_1)+(C_3)\Rightarrow R(\M_J)$ satisfies $(S_2)$ see \cite[Theorem 6.2]{trung-gcm}. 
\begin{proposition}\label{proposition-3.5}
 Let $M$ be a finitely generated faithful flat $A$-module. Suppose $\M=\M_L$ and $J\subsetneq \m $ or $\M=\M_\I$ and $J\subsetneq I_1$. Suppose the conditions $(C_1)$ and $(C_2)$ are satisfied by $a_1,\ldots,a_d$ for both $A$ and $M$. Also, assume that $(C_3)$ is satisfied by $A$ and $M.$  Then the following conditions are equivalent:
	\begin{enumerate}[label=(\roman*)]
		\item\label{proposition-3.5-equivalent-condition-1}  We have
		$e_1(\mathcal{M})-e_1^J(M)=2e_0(\M)-2\ell_A(M/M_1)-\ell_A(M_1/(M_2+JM)).$
		\item\label{proposition-3.5-equivalent-condition-2} There exist  exact sequences
		$$0\to ((M_1/(M_2+JM))\otimes\F_I(J))(-1)\to R(\M)/(R(\M)_+(1)+R(\M_J))\to S_J(\M)(-1)\to 0,$$
		
		$$0\to \F_{I_1}(\M_J)
        \to G(\M) \to R(\M)/(R(\M)_+(1)+R(\M_J))\to 0$$
         and 
         $$0\to \F_{\m}(\M_J)
        \to G(\M) \to R(\M)/(R(\M)_+(1)+R(\M_J))\to 0$$
		of graded $R(J)$-modules.
		\item \label{proposition-3.5-equivalent-condition-3} $K^{(i)}(\M)=0$ for $1\leq i\leq 3.$
	\end{enumerate}
	When this is the case, we have the injective maps 
	$$\h_{\xi}^i(G(\M))\hookrightarrow \h_{\xi}^i(R(\M)/(R(\M)_+(1)+R(\M_J)))\hookrightarrow \h_{\xi}^i(S_J(\M))(-1)$$
	of graded $R(J)$-modules for $0\leq i\leq d-1$ and 
	$\h_\xi^i(G(\M))\cong \h_{\xi}^i(R(\M)/(R(\M)_+(1)+R(\M_J)))\cong \h_{\xi}^i(S_J(\M))(-1)$ for $0\leq i\leq d-2$.
\end{proposition}
\begin{proof} We show that \ref{proposition-3.5-equivalent-condition-1}$\Rightarrow$ \ref{proposition-3.5-equivalent-condition-3} $\Rightarrow$ \ref{proposition-3.5-equivalent-condition-2}$\Rightarrow$ \ref{proposition-3.5-equivalent-condition-1}.
	\item \ref{proposition-3.5-equivalent-condition-1}  $\Rightarrow$ \ref{proposition-3.5-equivalent-condition-3} 
	Suppose \ref{proposition-3.5-equivalent-condition-1} holds and $K^{(i)}(\M)\neq 0$ for some $1\leq i\leq 3$. Then $i\in\Gamma$ and $e_0(K^{(i)}(\M))\neq 0$ using Theorem \ref{lemma-3.3} which is  a contradiction. 
	
	\item \ref{proposition-3.5-equivalent-condition-3}  $\Rightarrow$ \ref{proposition-3.5-equivalent-condition-2}
	From the exact sequence  \eqref{equation:step3}, we get that $\im \phi\cong ((M_1/(M_2+JM))\otimes\F_I(J))(-1)$. The first exact sequence follows from the exact sequence in \eqref{equation:step2}. Second and third exact sequences follow from the exact sequences in \eqref{equation:step1} and \eqref{equation:step1(a)}, respectively. 
	\item \ref{proposition-3.5-equivalent-condition-2}  $\Rightarrow$ \ref{proposition-3.5-equivalent-condition-1} Suppose \ref{proposition-3.5-equivalent-condition-2}
	holds. Then, for all $n\geq 0$,
	\begin{align*}
		\ell_A(M_n/M_{n+1})=\ell_A(G(\mathcal{M})_n)&=\ell_A(M/M_1)\binom{n+d-1}{d-1}+\ell_A\Big(\Big[\frac{R(\M)}{R(\M)_{+}(1)+R(\M_J)}\Big]_n\Big)\\
		&=\ell_A(M/M_1)\binom{n+d-1}{d-1}+\ell_A\Big(\Big[\frac{M_1}{M_2+JM}\otimes\F_I(J)\Big]_{n-1}\Big)\\
        &\qquad+\ell_A((S_J(\M))_{n-1}).
	\end{align*}
	We get the desired equality using the same argument as done in Theorem \ref{lemma-3.3}.
	
	Finally, since $\F_I(J) \cong (A/I)[X_1,\ldots,X_d]$ by Lemma \ref{lemma-2.2(a)} and  $(M_1/(M_2+JM))\otimes\F_I(J)$ is a maximal 
	Cohen-Macaulay module, we get the injective maps 
	$$\h_\xi^i(G(\M))\hookrightarrow \h_\xi^i(R(\M)/(R(\M)_+(1)+R(\M_J)))\hookrightarrow \h_\xi^i(S_J(\M))(-1)$$
	of graded $R(J)$-modules for $0\leq i\leq d-1$ 
	and $\h_\xi^i(G(\M))\cong \h_{\xi}^i(R(\M)/(R(\M)_+(1)+R(\M_J)))\cong \h_{\xi}^i(S_J(\M))(-1)$ for $0\leq i\leq d-2$.
\end{proof}
\begin{corollary}\label{corollary-3.6} Let $M$ be a finitely generated faithful flat $A$-module and $\M=\M_L$ or $\M=\M_\I.$ Suppose the conditions $(C_1)$ and $(C_2)$  are satisfied by $a_1,\ldots,a_d$ for both $A$ and $M.$ Also, assume that $C_3$ is satisfied by $A$ and $M$ and  $e_1(\M)-e_1^J(M)=2e_0(\M)-2\ell(M/M_1)-\ell(M_1/(M_2+JM))$ holds. Then the following conditions hold:
	\begin{enumerate}[label=(\roman*)]
		\item $M_{n+2}\subseteq J^n M_1$ for all $n\geq 0, $
		\item $J^nM\cap M_{n+1}=J^n M_1$ for all $n\geq 0,$
		\item $\depth G(\mathcal{\M})>0$ and 
		\item $(a_1,\ldots,\check{a_i},\ldots,a_d)M:_Ma_i\subseteq M_2+JM$ for all $1\leq i\leq d.$
	\end{enumerate}
\end{corollary}
\begin{proof}
	If either $J=I_1$ or $J=\m$, then by Remark \ref{remark-I1=Q-case}, $M$ is Cohen-Macaulay $A$-module and $\M=\M_J$. Then the conclusion holds. 
	Suppose $\M=\M_L$ and $J\subsetneq \m $ or $\M=\M_\I$ and $J\subsetneq I_1.$ 
	\begin{enumerate}[label=(\roman*)]
		\item By Proposition \ref{proposition-3.5}, $K^{(3)}(\M)=0$ which gives $M_{n+2}\subseteq J^n M_1 $ for all $n\geq 0$. 
		\item By Proposition \ref{proposition-3.5}, $K^{(2)}(\M)=0$, so $[K^{(2)}(\M)]_n=(J^nM\cap M_{n+1})/J^n M_1=0$ for all $n\geq 0.$
		\item Using the two exact sequences of Proposition \ref{proposition-3.5}\ref{proposition-3.5-equivalent-condition-2}  and depth lemma, it is enough to see that $(M_1/(M_2+JM))\otimes \F_I(J)\cong (M_1/(M_2+JM))\otimes A/I[X_1,\ldots,X_d]$ and $S_J(\M)$ have positive depths as $R(J)$-modules. 
		\item Let $x\in (a_1,\ldots,\check{a_i},\ldots,a_d)M:_Ma_i$. Since $a_1,\ldots,a_d$ satisfies $(C_2)$ for $M$, hence $x\in IM$. Then $xa_i=\mathop\sum\limits_{1\leq j\leq d, j\neq i} a_jx_j$, $x_j\in A$. Here, $x,x_j\in IM\subseteq M_1$ due to the condition $(C_2)$. Let 
		$$g=\overline{x}\otimes \overline{a_it}-\mathop\sum\limits_{1\leq j\leq d, j\neq i} \overline{x_j}\otimes \overline{a_jt}\in (M_1/(M_2+JM))\otimes \F_I(J)$$
		where $\overline{x}, \overline{x_j}$ denote the images of $x,x_j$ in $M_1/(M_2+JM)$ and $\overline{a_it}$ denote the image of $a_it$ in $\F_I(J)$. The image of $g$ under the map $0\to ((M_1/(M_2+JM))\otimes \F_I(J))(-1)\to R(\M)/(R(\M)_+(1)+R(\M_J))$ is 
		$$\overline{xa_it^2}-\mathop\sum\limits_{1\leq j\leq d,j\neq i} \overline{x_ja_jt^2}=0$$
		which implies that $g=0\in (M_1/(M_2+JM))\otimes \F_I(J) \cong (M_1/(M_2+JM))\otimes (A/I)[X_1,\ldots,X_d]$. Therefore, $x\in M_2+JM.$\qedhere
	\end{enumerate}
\end{proof}
\section{\texorpdfstring
{A necessary and sufficient condition for equality in \eqref{very-1st-eqn(a)}}
{A necessary and sufficient condition for equality in the inequality}
}\label{Section-proof-1}
In this section, we prove a necessary and sufficient condition for the equality
$e_1(\M)-e_1^J(M)=2e_0(\M)-2\ell_A(M/M_1)-\ell_A(M_1/(M_2+JM))$ to hold. 
We begin with the following remark. In what follows, we use the inequality $a(G(\M))+d\leq r_J(\M)$,
whose proof is similar to that of \cite[Proposition 3.2]{nvt} and so we skip providing it here. Recall that 
 $$a(G(\M)):=\max\{n\in \mathbb{Z}~|~ [\h_\xi^d(G(\M))]_n\neq (0)\}.$$

\begin{remark}\label{remark-5.1}Let $M$ be a finitely generated faithful flat $A$-module and $\M= \M_L$ or $\M=\M_\I.$
	Suppose $M_2\subseteq JM$ and  the equality $e_1(\M)-e_1^J(M)=2e_0(\M)-2\ell(M/M_1)-\ell(M_1/(M_2+JM))$ holds. Then
	$M$ is Cohen-Macaulay $A$-module with 
	$M_{n+1}=J^nM_1$ for $n\geq 1$,  $S_J(\M)=(0)$ and $G(\mathcal{M})$ is Cohen-Macaulay $A$-module with $a(G(\M))\leq 1-d.$ 
\end{remark}
\begin{proof}
	By Proposition \ref{proposition:p1},  $e_1(\M)-e_1^J(M)\geq e_0(\M)-\ell(M/M_1)$. Hence,
	\begin{eqnarray*}
		0&=& e_1(\M)-e_1^J(M)-2e_0(\M)+2\ell(M/M_1)+\ell(M_1/JM)\\
		&=&e_1(\M)-e_1^J(M)-2e_0(\M)+\ell(M/M_1)+\ell(M/JM)\\
		&\geq& \ell(M/JM)-e_0(\M)\\
		&\geq& 0.
	\end{eqnarray*}
	Thus, $\ell_A(M/JM)=e_0^J(M)$ which implies that $M$ is Cohen-Macaulay $A$-module, see \cite[Corollary 4.7.11]{bruns-herzog} and $e_1(\M)-e_1^J(M)=e_0(\M)-\ell(M/M_1)$. By Lemma \ref{lemma-2.3(a)} and Lemma \ref{lemma-2.3(b)}, either $S_J(\M)=(0)$ or $\dim S_J(\M)=d$. Using Corollary \ref{corr-2.5}, we get that $S_J(\M)=(0)$.
	 Consequently, $M_{n+1}=J^nM_1$ for
	all $n\geq 1$ and 
	$G(\M)$ is Cohen-Macaulay $A$-module by Vallabrega-Valla criteria. Since $G(\M)$ is Cohen-Macaulay $A$-module, we have $a(G(\M))+d\leq 1$. 
\end{proof}

\begin{proposition}\label{theorem-4.3}
 Let $M$ be a finitely generated faithful flat $A$-module and $\M=\M_L$ or $\M=\M_\I$. Suppose $(C_1)$ and $(C_2)$ conditions are satisfied by $a_1,\ldots,a_d$ for both $A$ and $M$.  Let the equality
	$e_1(\M)-e_1^J(M)=2e_0(\M)-2\ell_A(M/M_1)-\ell_A(M_1/(M_2+JM))$ hold. Then $M_{n+2}\subseteq J^nM_2+W$ for $n\geq 1$ and $W\subseteq M_2$.
\end{proposition}
\begin{proof}
 We first show that $W\subseteq M_2$.  By Lemma \ref{lemma-reduction-steps}(iii), we have that $W\subseteq M_2+JM$ and the equality \eqref{very-1st-eqn(a)} holds in $\overline{M}$. Then by Corollary \ref{corollary-3.6}, $J^n\overline{M}\cap \overline{M_{n+1}}=J^n\overline{M_1}$ for $n\geq 0$. 
 Now let $w=i+q\in W$ where $i\in M_2$ and $q\in JM$. Then $q=w-i\in (JM+W)\cap (M_2+W)=JM_1+W$.
	So, $q\in (JM_1+W)\cap JM=JM_1$ since $JM\cap W=0$ which implies $w=i+q\in M_2$. Hence $W\subseteq M_2$.
	
	For the rest of the proof, we may pass to the ring $\overline{A}$, module $\overline{M}$ and assume that the conditions $(C_1)$ and $(C_2)$ are satisfied by the images of $a_1,\ldots,a_d$ in $\overline{A}$ for both $\overline{A}$ and $\overline{M}.$ Moreover,  $\overline{A}$ and $\overline{M}$ both satisfy $(C_3)$. 
	We show, by induction on $d$, that $M_{n+2}= J^n M_2$ for all $n\geq 1$. Suppose the equality  $e_1(\M)-e_1^J(M)=2e_0(\M)-2\ell_A(M/M_1)-\ell_A(M_1/(M_2+JM))$
	holds. Then
	by Corollary \ref{corollary-3.6}, $\depth G(\M)>0$ and $M_{n+2}\subseteq J^n M_1$ for all $n\geq 0.$
	We may assume that 
	$f_1=a_1t\in R(J)$ is a regular element of $G(\M)$.
	
	Suppose $d=1$. Since $M_{n+2}\subseteq J^n M_1\subseteq a_1^nt^nM$ and $f_1^n$ is a regular element of $G(\M)$,  we get that $M_{n+2}=M_{n+2}\cap a_1^nt^nM=a_1^nt^nM_2= J^nM_2$ for all $n\geq 1$.
	
	Suppose $d\geq 2.$ As earlier, let $M^\prime=M/a_1M,$ 
	$\M^\prime=\{M_n^\prime=(M_n+a_1M)/a_1M\}_{n\geq0}$ and $\M_J^\prime=\{(J^nM+a_1M)/a_1M\}_{n\geq 0}$ be the filtration of $M^\prime$. Then the conditions  $(C_1)$ and $(C_2)$ are satisfied for $M^\prime$ and
	$e_1(\M^\prime)-e_1^J(M^\prime)=2e_0(\M^\prime)-2\ell_A(M^\prime/{M_1}^\prime)-\ell_A({M_1}^\prime/({M_2}^\prime+JM^\prime))$ by Lemma \ref{lemma-reduction-steps}.
	Further, $(C_1)$, $(C_2)$ and $(C_3)$ are satisfied for $\overline{M^\prime}=M^\prime/\h_{\m}^0(M^\prime)$. 
	Let $\overline{\M^\prime} =\{(M_n^\prime+\h_{\m}^0(M^\prime))/\h_{\m}^0(M^\prime)\}$ be the $I^\prime \overline{A^\prime}$-good filtration of $\overline{M^\prime}$, where $\overline{A^\prime}=A^\prime/\h_{\m}^0(A^\prime)$. 
	By induction hypothesis, we have for all $n\geq 1$,
	\begin{eqnarray*}
		&M_{n+2}^\prime&\subseteq  J^n M_2 ^\prime+\h_{\m}^0(M^\prime)\\
		\Rightarrow & M_{n+2} & \subseteq J^n M_2+a_1M+\h_{\m}^0(M^\prime)\\
		&& \subseteq J^n M_2+(a_1M:J)
	\end{eqnarray*}
	since $M_{n+2}\subseteq J^nM_1$ by Corollary \ref{corollary-3.6}, $a_1,\ldots,a_d$ is a d-sequence of $M$ and $\h_{\m}^0(M^\prime)=(a_1M:J)$. Now we show that $M_{n+2}\subseteq J^nM_2$ by induction on $n$. let $x\in M_{n+2}$, write $x=y+z$ where $y\in J^nM_2$ and 
	$z\in (a_1M:J)$. Then $z=x-y\in (a_1M:J)\cap M_{n+2}\subseteq (a_1M:J)\cap J^n M_1\subseteq (a_1M:J)\cap JM =a_1M$ since
	$M_{n+2}\subseteq J^nM_1$ by Corollary \ref{corollary-3.6} and $a_1,\ldots,a_d$ is a d-sequence of $M$.  This implies $$z\in a_1M\cap M_{n+2}=a_1M_{n+1}\subseteq JM_{n+1}$$ as $a_1t$ is a regular element for $G(\mathcal{M})$. 
	For $n=1$, we get that $x=y+z\in JM_2$. Suppose $n>1$. Then $z\in JM_{n+1}\subseteq J\cdot J^{n-1}M_2$ by induction hypothesis which implies $x\in J^nM_2$. This completes the proof.
\end{proof}
Now we prove the main result of this section. 
	\begin{theorem}\label{the-theorem}
     Let $M$ be a finitely generated faithful flat $A$-module and $\M=\M_L$ or $\M=\M_\I$. Suppose $(C_1)$ and $(C_2)$ conditions are satisfied by $a_1,\ldots,a_d$ for both $A$ and $M$. Then the following conditions are equivalent: 
	\begin{enumerate}
		\item \label{the-theorem-equivalent-cond-1} $e_1(\M)-e_1^J(M)=2e_0(\M)-2\ell(M/M_1)-\ell(M_1/(M_2+JM))$;
		\item \label{the-theorem-equivalent-cond-2} $M_{n+2}\subseteq J^nM_2+W,~(J^nM+W)\cap (M_{n+1}+W)=J^nM_1+W$ for all $n\geq 1$ and $(a_1,\ldots,\check{a_i},\ldots,a_d)M:_Ma_i\subseteq M_2+JM$ for $1\leq i\leq d$.
	\end{enumerate}
	When this is the case, we have $W\subseteq M_2$. 
\end{theorem}
\begin{proof}
 $\eqref{the-theorem-equivalent-cond-1}\Rightarrow \eqref{the-theorem-equivalent-cond-2}$ By Proposition \ref{theorem-4.3},  $M_{n+2}\subseteq J^n M_2+W$ for $n\geq 1$ and $W\subseteq M_2$. By Lemma \ref{lemma-reduction-steps}, we have $e_1(\overline{\M})-e_1^J(\overline{M})=2e_0(\overline{\M})-2\ell_A(\overline{M}/\overline{M_1} )-\ell_A(\overline{M_1} /(\overline{M_2} +J\overline{M}))$
and $W\subseteq M_2+JM$. Since 
$(C_1)$, $(C_2)$ and $(C_3)$ are satisfied for $\overline{M}$, by Corollary \ref{corollary-3.6} we get $(J^nM+W)\cap (M_{n+1}+W)=J^nM_1+W$ for $n\geq 1$ and 
$(a_1,\ldots,\check{a_i},\ldots,a_d)M:_M a_i\subseteq M_2+JM$ for all $1\leq i\leq d;$. 

 $\eqref{the-theorem-equivalent-cond-2}\Rightarrow \eqref{the-theorem-equivalent-cond-1}$ 	Suppose $J=I_1$ or $J=\m$. Hence, $JM=M_1$ for both filtrations. Then $M_2+W=(M_2+W)\cap (JM+W)=JM_1+W=J^2M+W$ and $M_{n+2}\subseteq J^{n+2}M+W$ for $n\geq 1.$ This implies that $\overline{\M}=\overline{\M_J}$ and $e_1(\overline{\M})-e_1^J(\overline{M})=0$. Since $W\subseteq (a_1,\ldots,\check{a_i},\ldots,a_d)M:_Ma_i\subseteq M_2+JM$,  it is enough to get the equality $$e_1(\overline{\M} )-e_1^J(\overline{M})=0=2(e_0(\overline{\M})-\ell(\overline{M}/J\overline{M}))$$
	in $\overline{M}$ and apply Lemma \ref{lemma-reduction-steps}. By \cite[Theorem 4.1]{trung-abs}, we have that $e_0(\overline{\M})=e_0^J(\overline{M})=\ell(\overline{M}/J\overline{M})-\ell(((a_1,\ldots,a_{d-1})M:_Ma_d)/ (((a_1,\dots,a_{d-1})M:_Ma_d)\cap JM))=\ell(\overline{M}/J\overline{M})$ as $(a_1,\ldots,a_{d-1})M:_Ma_d\subseteq M_2+JM=JM.$
	
	Now let $JM\varsubsetneq M_1.$ Hence, $J\varsubsetneq I_1.$
	Since $a_1,\ldots,a_d$ satisfy $(C_1)$ for both $M$ and $A$, by Remark \ref{remark-nota}, 
	$W\subseteq (a_1,\ldots,\check{a_i},\ldots,a_d)M:_Ma_i\subseteq M_2+JM$. We show the required equality in the module $\overline{M}$ and use Lemma \ref{lemma-reduction-steps}. Passing to the ring $\overline{A}$ and module $\overline{M}$, we assume that the conditions $(C_1)$ and $(C_2)$ are satisfied by the images of $a_1,\ldots,a_d$ in $\overline{A}$ for both $\overline{A}$ and $\overline{M}.$ Moreover, since $\overline{A}$ and $\overline{M}$ both satisfy $(C_3)$,
	$M_{n+1}=J^{n-1}M_2$ for $n\geq 2$  and $J^nM\cap M_{n+1}=J^nM_1$ for all $n\geq 1$. By Proposition \ref{proposition-3.5}, it is enough to show that $K^{(i)}(\M)=0$ for $1\leq i\leq 3$. 
	It follows easily that 
	$$K^{(3)}(\M)=\mathop\oplus\limits_{n\geq 1}\frac{M_{n+2}+J^nM_1 }{J^nM_1 }=0 \text{ and }
	K^{(2)}(\M)=\mathop\oplus\limits_{n\geq 1}\frac{J^nM\cap M_{n+1}}{ J^nM_1}=0.$$
	Now we show that $K^{(1)}(\M)=0.$ From \eqref{equation:step3},  we have the exact sequence 
	\begin{equation*}
		0\to K^{(1)}(\M)\to ([R(\M)/(R(\M)_+(1)+R(\M_J))]_1\otimes \F_I(J))(-1)\xrightarrow{\psi_3}\im \phi\to 0
	\end{equation*}
	where $K^{(1)}(\M)=\ker\psi_3$. Suppose $K^{(1)}(\M)\neq 0$. Let 
	$n=\min\{s\mid [K^{(1)}(\M)]_s\neq 0 \}.$
	Since $\big[[R(\M)/(R(\M)_+(1)+R(\M_J))]_1\otimes \F_I(J)\big]_{s-1}=0$ for all $s\leq 0$, we have $[K^{(1)}(\M)]_s=0$ for $s\leq 0.$
	For $s=1$, notice that 
	$$\big[[R(\M)/(R(\M)_+(1)+R(\M_J))]_1\otimes \F_I(J)\big]_{0}\cong (M_1/(M_2+JM))\otimes A/I\cong M_1/(M_2+JM).$$ 
	
	Let $g=\mathop\sum\limits_{1\leq i\leq p} \overline{z_i}\otimes\overline{r_i} \in [K^{(1)}(\M)]_1$ where $\overline{z_i}$ denote the image of 
	$z_i\in M_1$ in $M_1/(M_2+JM)$ and $\overline{r_i}$ denote the image of $r_i$ in $A/I$.  
	Then $\psi_3(g)=\mathop\sum\limits_{1\leq i\leq p} \overline{r_itz_i}=0$
	where $\overline{r_itz_i}$ denote the image of $r_itz_i$ in $\im \phi \subseteq [R(\M)/(R(\M)_+(1)+R(\M_J))]_1=M_1/(M_2+JM)$ which implies $$\mathop\sum\limits_{1\leq i\leq p} r_iz_i\in M_2+JM \Rightarrow
	g=\mathop\sum\limits_{1\leq i\leq p} \overline{z_i}\otimes \overline{r_i}=\mathop\sum\limits_{1\leq i\leq p}\overline{r_iz_i}\otimes \bar{1}=0$$
    in $\big[[R(\M)/(R(\M)_+(1)+R(\M_J))]_1\otimes \F_I(J)\big]_{0}$. 
	This gives that $[K^{(1)}(\M)]_1=0$. 
	Therefore, $n\geq 2.$ Let $0\neq g\in [K^{(1)}(\M)]_n=[\ker \psi_3]_n.$ Put 
	\begin{align*}
		\Lambda=\{(\alpha_1,\ldots,\alpha_d)\in \mathbb{Z}^d~|~  \alpha_i\geq 0 \text{ for } 1\leq i\leq d \text{ and } \mathop\sum_{i=1}^d \alpha_i=n-1\}.
	\end{align*}
	We may write 
	\begin{eqnarray*}
		g&=&\mathop\sum\limits_{\alpha \in\Lambda, \alpha_1\geq 1} \overline{x_\alpha}\otimes \overline{(a_1t)^{\alpha_1}(a_2t)^{\alpha_2}\ldots(a_dt)^{\alpha_d}}
		+ \mathop\sum\limits_{\beta\in\Lambda,\beta_1=0} \overline{x_\beta}\otimes \overline{(a_2t)^{\beta_2}\ldots(a_dt)^{\beta_d}}
	\end{eqnarray*}
	where $\overline{x_{\alpha}}$ and $\overline{x_{\beta}}$ denote the images of $x_{\alpha},x_{\beta}\in M_1$ in $M_1/(M_2+JM)$, and
	$\overline{(a_1t)^{\alpha_1}(a_2t)^{\alpha_2}\ldots(a_dt)^{\alpha_d}}$ and $\overline{(a_2t)^{\beta_2}\ldots(a_dt)^{\beta_d}}$
	denote the images of corresponding elements in $\F_I(J)_{n-1}$. Then
	\begin{align*}
		\psi_3(g)&=\mathop\sum\limits_{\alpha \in\Lambda, \alpha_1\geq 1} \overline{a_1^{\alpha_1}a_2^{\alpha_2}\ldots a_d^{\alpha_d}t^nx_{\alpha}}+\mathop\sum\limits_{\beta\in\Lambda,\beta_1=0}\overline{ a_2^{\beta_2}\ldots a_d^{\beta_d}t^nx_\beta}=0\in \Big[\frac{R(\M)}{R(\M)_+(1)+R(\M_J)}\Big]_n\\
		\Rightarrow &\qquad\mathop\sum\limits_{\alpha \in\Lambda, \alpha_1\geq 1} a_1^{\alpha_1}a_2^{\alpha_2}\ldots a_d^{\alpha_d}x_{\alpha}+
		\mathop\sum\limits_{\beta\in\Lambda,\beta_1=0} a_2^{\beta_2}\ldots a_d^{\beta_d}x_\beta \in M_{n+1}+J^nM.
	\end{align*}
	
	We have for all $n\geq 2$, $M_{n+1}+J^nM=J^{n-1}M_2+J^nM\subseteq a_1J^{n-2}M_2+(a_2,\ldots,a_d)^{n-1}M_2+a_1J^{n-1}M+(a_2,\ldots,a_d)^nM\subseteq a_1J^{n-2}M_2+a_1J^{n-1}M+(a_2,\ldots,a_d)^{n-1}M$
	and $\mathop\sum\limits_{\beta\in\Lambda,\beta_1=0}a_2^{\beta_2}\ldots a_d^{\beta_d}x_\beta \in (a_2,\ldots,a_d)^{n-1}M$. So, there exists 
	$q\in J^{n-2}M_2$ and $q^\prime \in J^{n-1}M$ such that 
	\begin{eqnarray*}
		&\mathop\sum\limits_{\alpha \in\Lambda, \alpha_1\geq 1} a_1^{\alpha_1}a_2^{\alpha_2}\ldots a_d^{\alpha_d}x_{\alpha}+a_1q+a_1q^\prime\in(a_2,\ldots,a_d)^{n-1}M\\
		\Rightarrow& a_1\cdot\{\mathop\sum\limits_{\alpha \in\Lambda, \alpha_1\geq 1} a_1^{\alpha_1-1}a_2^{\alpha_2}\ldots a_d^{\alpha_d}x_{\alpha}+q+q^\prime\} \in(a_2,\ldots,a_d)^{n-1}M\\
		\Rightarrow& \mathop\sum\limits_{\alpha \in\Lambda, \alpha_1\geq 1} a_1^{\alpha_1-1}a_2^{\alpha_2}\ldots a_d^{\alpha_d}x_{\alpha}+q+q^\prime\in(a_2,\ldots,a_d)^{n-2}(M_2+JM)\subseteq M_n+J^{n-1}M
	\end{eqnarray*}
	since $(a_2,\ldots,a_d)^{n-1}M:_Ma_1\subseteq (a_2,\ldots,a_d)^{n-2}.((a_2,\ldots,a_d)M:_Ma_1)$ by \cite[Lemma 3.5]{goto-ozeki-ncm-2010} and 
	$(a_2,\ldots,a_d)M:_Ma_1\subseteq M_2+JM$ is given. Hence, we get
	\begin{eqnarray*}\mathop\sum\limits_{\alpha \in\Lambda, \alpha_1\geq 1} a_1^{\alpha_1-1}a_2^{\alpha_2}\ldots a_d^{\alpha_d}x_{\alpha}&\in& M_n+J^{n-1}M\\
		\Rightarrow
	\mathop\sum\limits_{\alpha\in\Gamma_1} \overline{x_\alpha}\otimes \overline{(a_1t)^{\alpha_1-1}(a_2t)^{\alpha_2}\ldots(a_dt)^{\alpha_d}}&\in& [K^{(1)}(\M)]_{n-1}=0\end{eqnarray*}
	which gives that  
	$$g=\mathop\sum\limits_{\beta\in\Gamma_2}\overline{x_\beta}\otimes \overline{(a_2t)^{\beta_2}\ldots(a_dt)^{\beta_d}}.$$ 
By symmetry of the elements $a_it$, the similar steps can be repeated for any $a_i$, $1\leq i\leq d.$
Therefore, $g=0$ which is a contradiction. Hence $K^{(1)}(\M)=0.$
	This completes the proof.
\end{proof}

\section{A detour to generalized depth and generalized Cohen-Macaulayness}\label{Section-detour}
At this point, we take a detour to gather facts on the finiteness properties of the local cohomology modules. The results of this section will be used in proving our main theorems in Section \ref{Section-main-1}. We first recall the notion of generalized depth, defined in \cite{HM94}. We refer to  the papers of
	Brodmann \cite{Brodmann} and Faltings \cite{Faltings} for this topic.  
	For an $A$-module $M$  and an ideal $I$ of $A$, the generalized depth of $M$ with respect to $I$ is 
	$$g\text{-} \depth_I(M):=\sup\{k\in\mathbb{Z}\mid I\subseteq \sqrt{\ann_A\h_{\m}^i(M)} \text{ for all }i<k\}.$$
	Note that, $g\text{-}\depth_I M\geq k$ if and only if some power of $I$ annihilates $\h_\m^i(M)$ for $0\leq i\leq k-1.$
	For a  non-negatively graded Noetherian ring  $S=\mathop\oplus\limits_{n\geq 0} S_n$  with $S_0$ local and a homogeneous ideal $\mathcal{J}$ of $S$, we define the generalized depth of a graded $S$-module $P$ with respect to 
	$\mathcal{J}$ as, $$g\text{-}\depth_{\mathcal{J}} P=g\text{-}\depth_{\mathcal{J}L_\mathcal{E}} P_\mathcal{E}$$
	where $\mathcal{E}$ is the unique maximal homogeneous ideal of $S$. Investigating the relationship between the depths of $R(I)$ and $G(I)$, authors in  \cite[Proposition 3.2]{HM94} proved that 
	$$g\text{-}\depth_{R(I)_+} R(I)= g\text{-}\depth_{R(I)_+} G(I)+1.$$
    \begin{remark}
        We note that $$g\text{-} \depth_{G(I)_+} G(\I)\cong g\text{-} \depth_{G(\I)_+} G(\I). $$
        Since $R(I)$ inject in $R(\I),$ we have $H^i_{R(I)_+}(G(\I))\cong H^i_{R(I)_+R(\I)}(G(\I))\cong H^i_{R(\I)_+}(G(\I)).$ Last isomorphism happens because $\sqrt{R(I)_+R(\I)}=\sqrt{R(\I)_+}.$
    \end{remark}
	We extend the above result for $I$-good filtrations $\M=\M_L$ and $\M=\M_\I$ in Proposition \ref{Appendix-proposition-main-1}. First, we note the following lemma.
	
	
	\begin{lemma}\label{Appendix-Lemma-1}
		Let $M$ be a finitely generated $A$-module and $\M=\M_L$ or $\M=\M_\I.$ 
		Let $\p$ be a prime ideal of $R(I)$ such that $R(I)_+(1)\subseteq \p$ and $R(I)_+\nsubseteq \p$. Let $\q=\p/(R(I)_+(1)).$ Then 
		$$\depth R(\M)_{\p}=\depth G(\mathcal{M})_{\q}+1.$$ 
	\end{lemma}
	\begin{proof}
		Choose $xt^{m}\in R(I)_+\setminus \p$ with $x\in I^{m}$. Then  $(xt^{m-1})[R(\M)_{\p}]_n\subseteq (R(I)_+(1)) R(\M)_p\subseteq (R(\M)_+(1)_{\p}).$ 
  Indeed, $(xt^{m-1})R(\M)_{\p}=R(\M)_+(1)_{\p}.$ To see this, let $y=at^n\in R(\M)_+(1)$ with $a\in M_{n+1}$, then 
		$at^n=xt^{m-1}.\frac{at^{n+1}}{xt^m}\in xt^{m-1}R(\M)_{\p}$. Therefore  $R(\M)_+(1)_{\p}\cong xt^{m-1}R(\M)_{\p}$. Hence, $G(\M)_{\q}\cong R(\M)_{\p}/R(\M)_+(1)_{\p}\cong R(\M)_{\p}/(xt^{m-1})R(\M)_{\p}.$
		If $xt^{m-1}$ is a zero-divisor in  $R(\M)_{\p}$, then there exists an associated prime ${\p}^\prime$ of $R(\M)$ such that $xt^{m-1}\in {\p}^\prime\subseteq \p$.  Suppose ${\p}^\prime=(0:_{R(\M)} f)$, then $xt^{m-1}f=0\Rightarrow xt^{m}f=0\Rightarrow xt^m\in {\p}^\prime\subseteq 
		\p$, a contradiction. Thus, $xt^{m-1}$ is non-zero-divisor in 
		$R(\M)_{\p}$ which gives the desired result.
	\end{proof}
     
    The proof of the remark below is similar to that of \cite[Remark 2.2]{HM94}, but we include the steps for completeness.
	\begin{remark}\label{remark_6.3}
	    Let $(A,\m)$ be a complete local ring and and $p\subseteq q$ primes of $R(I).$ Let $M$ be a finitely generated $A$-module, then 
        $$
        \depth R(\M)_q+\dim R(I)/q\leq \depth R(\M)_p+\dim R(I)/p.
        $$
        \begin{proof}
            Since $(A,\m)$ is a complete local ring, $R(I)$ is a catenary ring and  $\dim R(I)/\p-\dim R(I)/\q= \rm{ht}~\q/\p.$ Thus, by localizing at $\q$, we may assume that $\q$ is the maximal ideal of $R(I).$ Since $R(\M)$ is a finitely generated $R(I)$-module, by Ischebeck's Theorem \cite[Theorem 17.1]{Mat}, $\Ext^i(R(I)/\p,R(\M))=0$ for $i\leq \depth R(\M)- \dim R(I)/\p.$ Therefore $\depth_\p R(\M)+ \dim R(I)/\p\geq \depth R(\M).$
        \end{proof}
	\end{remark}
	\begin{proposition}\label{Appendix-proposition-main-1}
		Let $M$ be a finitely generated $A$-module and $\M=\M_L$ or $\M=\M_\I.$ Then 
		$$g\text{-} \depth_{G(I)_+} G(\M)=g\text{-}\depth_{R(I)_+}R(\M)-1.$$
	\end{proposition}
	\begin{proof}
		We may assume that $A$ is complete, see \cite[Remark 2.4]{HM94}. Consequently, $A$ is a homomorphic image of a regular local ring and hence
		$R(I)$ and $G(I)$ are homomorphic images of regular rings. 
		Now using \cite[Proposition 2.4]{marley2}, we have that	
		\begin{align*}
			g\text{-} \depth_{G(I)_+} G(\mathcal{M})&=
			\min\{\depth G(\mathcal{M})_\q+\dim G(I)/\q \mid \q\in \rm{Spec}~ G(I), G(I)_+\nsubseteq \q,  \\& \qquad \q\text{ is homogeneous}\}\\
			g\text{-} \depth_{R(I)_+} R(\M)&=\min\{\depth R(\M)_\p+\dim R(I)/\p  \mid \p\in \rm{Spec}~ R(I), R(I)_+\nsubseteq \p,
		  \\&\qquad \p \text{ is homogeneous}\}. 
		\end{align*}
		
		Suppose $g\text{-} \depth_{R(I)_+}R(\M)\geq k$.
       Let $\q$ be a homogeneous prime ideal of $G(I)$ not containing $G(I)_+$. 
       Then there exists a prime ideal 
		 $\p$ in $R(I)$ such that $R(I)_+(1)\subseteq \p$ and $\q=\p/R(I)_+(1)$. Clearly, $R(I)_+\nsubseteq \p$ as $G(I)_+\nsubseteq \q$. 
		By Lemma  \ref{Appendix-Lemma-1}, $\depth R(\M)_\p=\depth G(\M)_\q+1$. Therefore,
		\begin{equation*}\depth G(\mathcal{M})_\q+\dim G(I)/\q =\depth R(\M)_\p-1+\dim R(I)/\p\geq k-1\end{equation*}
		which implies that $g\text{-} \depth_{G(I)_+} G(\mathcal{M})\geq g\text{-} \depth_{R(I)_+}R(\M) -1.$
		
		For the converse, let $g\text{-} \depth_{G(I)_+} G(\mathcal{M})\geq k$. Let $\p$ be a homogeneous prime ideal of $R(I)$ such that $R(I)_+\nsubseteq \p$. 
		Then, there exists a ${\p}^{\prime}\in \rm{Spec} ~R(I)$, $R(I)_+\nsubseteq \p^\prime$ and $(R(I)_+(1),\p)\subseteq \p^\prime.$ Otherwise, for each prime ideal $\p^\prime$ with 
		$(R(I)_+(1),\p)\subseteq \p^\prime$, we have $R(I)_+\subseteq \p^\prime$. This means $R(I)_+\subseteq \sqrt{(R(I)_+(1),\p)}$ which implies $R(I)_{+}^l\subseteq (R(I)_+(1),\p)$ for all $l\gg 0$. This gives $I^l=I^{l+1}+\p_l$, for $l\gg 0$. Then, by Nakayama lemma, $I^l\subseteq\p_l$ for $l\gg 0$. Thus $R(I)_+^l\subseteq \p\Rightarrow  R(I)_+\subseteq \p$, which is a contradiction. Therefore,  there exists $\p^\prime \in \Spec~ R(I)$ such that $(R(I)_+(1),\p)\subseteq\p^\prime$ and $R(I)_+\nsubseteq \p^\prime$. Now let $\q=\p^\prime/R(I)_+(1).$

		
        Then, by Lemma  \ref{Appendix-Lemma-1} and Remark \ref{remark_6.3}, we have
		\begin{align*}
			\depth R(\M)_\p+\dim R(I)/\p &\geq \depth R(\M)_{\p^\prime}+\dim R(I)/\p^\prime\\
			&\geq \depth G(\M)_\q+1+\dim G(I)/\q\\
			&\geq k+1.
		\end{align*}
		Hence, $g\text{-} \depth_{R(I)_+} R(\M)\geq k+1$ which gives 
		$g\text{-} \depth_{R(I)_+}R(\M)\geq g\text{-} \depth_{G(I)_+} G(\M)+1.$
	\end{proof}
	Next, we establish a relationship between the generalized depths of Sally module and associated graded module of $I$-good filtration $\M=\M_L$ or $\M=\M_\I$ in Proposition \ref{proposition-2.7}. The $I$-adic case was discussed in \cite{ozeki13}.
	First, we need the next two lemmas. 
	\begin{lemma}\label{lemma-2.6(a)}
     Let $M$ be a finitely generated faithful flat $A$-module and $\M=\M_\I.$ Suppose $(C_1)$ and $(C_2)$ conditions are satisfied by $a_1,\ldots,a_d$ for both $A$ and $M$. Then, for each $0\leq i\leq d,$ we have 
		$$[\h_\xi^i(I_1R(\M_J))]_n\cong\begin{cases} \h_\m^0(M)  & \mbox{ if } i=n=0\\
			\h_\m^{i-1}(M) & \mbox{ if } 3\leq i\leq d\mbox{ and } 2-i\leq n\leq -1\\
			(0) & \mbox{ otherwise }  
		\end{cases}$$
		for all $n\in\mathbb{Z}$. Hence $I_1 R(\M_J)$ is a generalized Cohen-Macaulay $R(J)$-module and $\dim_{R(J)} I_1 R(\M_J)=d+1$. 
	\end{lemma}
	\begin{proof}
		By Lemma \ref{lemma-2.2(a)}, $\F_{I_1}(\M_J)\cong (M/I_1M)[X_1,\ldots,X_d]$ is Cohen-Macaulay $R(J)$-module. So, by the exact sequence, $$0\to I_1 R(\M_J)\to R(\M_J)\to \F_{I_1}(\M_J)\to 0,$$ we get 
		$$\h_\xi^i(I_1 R(\M_J))\cong \h_\xi^i(R(\M_J))$$  for $0\leq i\leq d-1$ and the exact sequence $$0\to\h_\xi^d(I_1 R(\M_J))\to\h_\xi^d(R(\M_J))\to\h_\xi^d(\F_{I_1}(\M_J))$$
		of graded $R(J)$-modules.  Since $[\h_\xi^d(R(\M_J))]_n=0$ for all $n\leq 1-d$ by \cite[Theorem 6.2]{trung-gcm} and $[\h_\xi^d(\F_{I_1}(\M_J))]_n=0$ for all $n\geq 1-d$, we have 
		$\h_\xi^d(I_1 R(\M_J))\cong \h_\xi^d(R(\M_J))$ as graded $R(J)$-module. Now the conclusion follows by \cite[Theorem 6.2]{trung-gcm}.
	\end{proof}
    \begin{lemma}\label{lemma-2.6(b)}
   Let $M$ be a finitely generated faithful flat $A$-module and $\M=\M_L.$  Suppose $(C_1)$ and $(C_2)$ conditions are satisfied by $a_1,\ldots,a_d$ for both $A$ and $M$. Then, for each $0\leq i\leq d,$ we have 
		$$[\h_\xi^i(\m R(\M_J))]_n\cong\begin{cases} \h_\m^0(M)  & \mbox{ if } i=n=0\\
			\h_\m^{i-1}(M) & \mbox{ if } 3\leq i\leq d\mbox{ and } 2-i\leq n\leq -1\\
			(0) & \mbox{ otherwise }  
		\end{cases}$$
		for all $n\in\mathbb{Z}$. Hence $\m R(\M_J)$ is a generalized Cohen-Macaulay $R(J)$-module and $\dim_{R(J)} \m R(\M_J)=d+1$.
	\end{lemma}
	\begin{proof}
		By Lemma \ref{lemma-2.2(a)}, $\F_{\m}(\M_J)\cong (M/\m M)[X_1,\ldots,X_d]$ is Cohen-Macaulay $R(J)$-module. So, by the exact sequence, $$0\to \m R(\M_J)\to R(\M_J)\to \F_{\m}(\M_J)\to 0,$$ we get 
		$$\h_\xi^i(\m R(\M_J))\cong \h_\xi^i(R(\M_J))$$  for $0\leq i\leq d-1$ and the exact sequence $$0\to\h_\xi^d(\m R(\M_J))\to\h_\xi^d(R(\M_J))\to\h_\xi^d(\F_{\m}(\M_J))$$
		of graded $R(J)$-modules.  Since $[\h_\xi^d(R(\M_J))]_n=0$ for all $n\leq 1-d$ by \cite[Theorem 6.2]{trung-gcm} and $[\h_\xi^d(\F_{\m}(\M_J))]_n=0$ for all $n\geq 1-d$, we have 
		$\h_\xi^d(\m R(\M_J))\cong \h_\xi^d(R(\M_J))$ as graded $R(J)$-module. Now the conclusion follows by \cite[Theorem 6.2]{trung-gcm}.
	\end{proof}
	
	\begin{proposition}\label{proposition-2.7}
     Let $M$ be a finitely generated faithful flat $A$-module and $\M=\M_L$ or $\M=\M_\I.$  Let $S_J(\M)\neq 0$. Suppose $(C_1)$ and $(C_2)$ conditions are satisfied by $a_1,\ldots,a_d$ for both $A$ and $M$; also both $A$ and $M$ satisfy condition $(C_3).$   Let $l=g\text{-}\depth_\xi S_J(\M).$ Then (i) $g\text{-}\depth_\xi G(\M)=l-1$
		if $l<d$ and (ii) $S_J(\M)$ is generalized Cohen-Macaulay $R(J)$-module if and only if $g\text{-} \depth_\xi G(\M)\geq d-1$. The latter is the case when $l=d$.
	\end{proposition}
	\begin{proof}
		Clearly $l\leq \dim_{R(J)} S_J(\M)=d$. By Lemma \ref{lemma-2.6(a)} and Lemma \ref{lemma-2.6(b)}, $I_1 R(\M_J)$ and $\m R(\M_J)$ are $(d+1)$ dimensional generalized Cohen-Macaulay $R(J)$-modules respectively.
	 Consider the exact sequences corresponding to the  filtration $\M=\M_\I$ and $\M=\M_L$, respectively
     \begin{align*}
         &0\to I_1 R(\M_J)\to R(\M)_+(1)\to S_J(\M)\to 0,\\
         &0\to \m R(\M_J)\to R(\M)_+(1)\to S_J(\M)\to 0.
     \end{align*}
     Now for any of the filtration $\M=\M_\I$ or $\M=\M_L$, if $l=d$ then it follows that $g\text{-} \depth_\xi R(\M)_+(1)\geq d$ and if $l<d$ then   $g\text{-} \depth_\xi R(\M)_+(1)=l$. 
		Next, we consider the exact sequence 
		$$0\to R(\M)_+\to R(\M)\to M\to 0.$$ Since $M$ is generalized Cohen-Macaulay $A$-module of dimension $d$, we get that 
		$g\text{-} \depth_\xi R(\M)\geq d$ if $l=d$ and $g\text{-} \depth_\xi R(\M)=g\text{-} \depth_\xi R(\M)_+=l$ if $l<d$. By Proposition \ref{Appendix-proposition-main-1},  $g\text{-}\depth_\xi G(\M)=l-1$ if $l<d$.
		
		Finally, we see that $l=d\Longleftrightarrow g \text{-}\depth_\xi R(\M)\geq d \Longleftrightarrow g\text{-}\depth_\xi G(\M)\geq d-1.$ Note that  
		$S_J(\M)$ is generalized Cohen-Macaulay $R(J)$-module if and only if $l=d$. 		
	\end{proof}
    \section{\texorpdfstring
{Buchsbaumness of $G(\I)$}
{Buchsbaumness of G(I)}
}\label{Section-main-1}
 In this section, we develop the proofs of Theorem \ref{t1} and Theorem \ref{t2}. We begin with the following note. 
 
 Consider the canonical homomorphism of 
 graded modules $G(\M) \xrightarrow{\phi} G(\overline{\M})\to 0$.  It is easy to see that $[\ker \phi]_n\cong (M_n\cap W)/(M_{n+1}\cap W)$ which is $(0)$ for $n\gg 0$. 
  So,  $\h^0_{\xi}(\ker\phi)=\ker\phi$ and
 $\h^i_{\xi}(\ker\phi)=0$ for $i\geq 1$.  We get an exact sequence
 \begin{eqnarray}\label{new-eq-1}
0\to\h^0_{\xi}(\ker\phi)\to\h^0_{\xi}(G(\M))\to\h^0_{\xi}(G(\overline{\M})) \mbox{ and }\nonumber \\
\h^i_{\xi}(G(\M))\cong\h^i_{\xi}(G(\overline{\M} )) \mbox{ for } i\geq 1. 
 \end{eqnarray} 
 Suppose $e_1(\M )-e_1^J(M)=2e_0(\M)-2\ell(M/M_1)-\ell(M_1/(M_2+JM))$. Then passing to $\overline{M}$, we may assume that $(C_1)$, $(C_2)$ and $(C_3)$ are satisfied. Then by Corollary \ref{corollary-3.6}, $\depth G(\overline{\M})>0$ which gives 
 \begin{equation}\label{new-eq-2}
\h^0_{\xi}(\ker\phi)=\h^0_{\xi}(G(\M)).
 \end{equation}
 
 \begin{proposition}\label{theorem-5.2}
  Let $M$ be a finitely generated faithful flat $A$-module and $\M=\M_L$ or $\M=\M_\I.$ Suppose $(C_1)$ and $(C_2)$ conditions are satisfied by $a_1,\ldots,a_d$ for both $A$ and $M.$ Let  $e_1(\M)-e_1^J(M)=2e_0(\M)-2\ell_A(M/M_1)-\ell_A(M_1/(M_2+JM))$ holds. Then $G(\M)$ and  $S_J(\M)$ are a generalized Cohen-Macaulay $R(J)$-modules.
 \end{proposition}
 \begin{proof}
 	By Remark \ref{remark-5.1}, we may assume that $M_2\nsubseteq JM$ and $S_J(\M)\neq 0$. We have $\h_{\xi}^i(G(\M))\cong \h_{\xi}^i(G(\overline{\M}))$ for $i\geq 1$ as in \eqref{new-eq-1}.  Now by passing to the ring $\overline{A}$ and module $\overline{M}$, we may assume that the conditions $(C_1)$ and $(C_2)$ are satisfied by the images of $a_1,\ldots,a_d$ in $\overline{A}$ for both $\overline{A}$ and $\overline{M}$ see Lemma \ref{lemma-reduction-steps}. Moreover, $\overline{A}$ and $\overline{M}$ both satisfy $(C_3)$. By Proposition \ref{proposition-3.5}, we have that
 	$$\h_\xi^i(G(\M))\hookrightarrow \h_{\xi}^i(R(\M)/(R(M)_+(1)+R(\M_J)))\hookrightarrow \h_{\xi}^i(S_J(\M))(-1)$$
 	for $0\leq i\leq d-1$. Then $g\text{-}\depth_\xi S_J(\M)\leq g\text{-}\depth_\xi G(\M)$. Therefore, by Proposition 
 	\ref{proposition-2.7}, $g\text{-} \depth_\xi S_J(\M)=g\text{-} \depth_\xi G(\M)=d$ which gives that $G(\M)$ and $S_J(M)$ are generalized Cohen-Macaulay $R(J)$-modules.
 \end{proof}
We now describe the local cohomology modules of $G(\M)$.
\begin{theorem}\label{proposition-5.3}
  Let $M$ be a finitely generated faithful flat $A$-module and $\M=\M_L$ or $\M=\M_\I.$ Suppose $(C_1)$ and $(C_2)$ conditions are satisfied by $a_1,\ldots,a_d$ for both $A$ and $M.$
 Let $e_1(\M)-e_1^J(M)=2e_0(\M)-2\ell(M/M_1)-\ell(M_1/(M_2+JM))$ holds. Then the following statements hold.
	\begin{enumerate}[label=(\roman*)]
		\item\label{Proposition-5.3-item1} For all $n\in\mathbb{Z},$
		$$[\h^0_\xi(G(\M))]_n\cong \begin{cases}
			W/(M_3\cap W) &\mbox{ if } n=2\\
			(M_n\cap W)/(M_{n+1}\cap W) &\mbox{ if } n\geq 3\\
			(0) & \mbox{ otherwise. } 
		\end{cases}    
		$$
		\item\label{Proposition-5.3-item2} We have $\h^i_\xi(G(\M))=[\h^i_\xi(G(\M))]_{2-i}\cong \h_\m^i(M)$ for $1\leq i\leq d-1.$
		\item\label{Proposition-5.3-item3} We have $a(G(\M))\leq 2-d.$
	\end{enumerate}
\end{theorem}
\begin{proof}
	In view of Remark \ref{remark-5.1}, we may assume that $M_2\nsubseteq JM$,  $S_J(\M)\neq (0)$.  By  \eqref{new-eq-2}, 
	$[\h^0_\xi(G(\M))]_n=(M_n\cap W)/(M_{n+1}\cap W)$ which vanishes for $n=0,1$  as  $W \subseteq M_2$ from Proposition \ref{theorem-4.3}. Further, $[\h^0_\xi(G(\M))]_2=W/(M_3\cap W)$. This proves part \ref{Proposition-5.3-item1}. 
    
	 Now by passing to the ring $\overline{A}$ and module $\overline{M}$, we may assume that the conditions $(C_1)$ and $(C_2)$ are satisfied by the images of $a_1,\ldots,a_d$ in $\overline{A}$ for both $\overline{A}$ and $\overline{M}$ see Lemma \ref{lemma-reduction-steps}. Moreover, $\overline{A}$ and $\overline{M}$ both satisfy $(C_3)$, $\depth G(\M)>0$  by Corollary \ref{corollary-3.6} and $f_1=a_1t$ is a non-zero-divisor on $G(\M)$.  
	The exact sequence $0\to G(\M)(-1)\xrightarrow{f_1}G(\M)\to G(\M)/f_1G(\M)\to 0$
	gives the following exact sequence of local cohomology modules
	\begin{align}
		&0\to \h^0_{\xi}(G(\M)/f_1G(\M))\to \h^1_{\xi}(G(\M))(-1)\xrightarrow{f_1}\h^1_{\xi}(G(\M))\to\h^1_{\xi}(G(\M)/f_1G(\M))\to\ldots \nonumber \\
		&\to\h^{i-1}_{\xi}(G(\M)/f_1G(\M))\to \h^{i}_{\xi}(G(\M))(-1)\xrightarrow{f_1} \h^{i}_{\xi}(G(\M)) \to \h^{i}_{\xi}(G(\M)/f_1G(\M))\to\ldots\nonumber\\
		&\to\h^{d-1}_{\xi}(G(\M)/f_1G(\M))\to \h^d_{\xi}(G(\M))(-1)\xrightarrow{f_1}  \h^d_{\xi}(G(\M))\to 0. \label{exact-seq-LC} 
	\end{align}
	We apply induction on $d$  to prove the assertions in  \ref{Proposition-5.3-item2} and \ref{Proposition-5.3-item3}. 
	Suppose $d=1$. Then $M_{n+2}=J^nM_2$ for $n\geq 1$ by Proposition \ref{theorem-4.3}. This gives $G(\M)/f_1 G(\M)=M/M_1\oplus M_1/(M_2+JM)\oplus M_2/JM_1$ and $[\h^1_{\xi}(G(\M))]_n=0$ for all $n\gg 0$. This gives that $a_1(G(\M))\leq 1$.
	
	Now let $d\geq 2$ and the assertions in \ref{Proposition-5.3-item2} and \ref{Proposition-5.3-item3} hold for $d-1$. Let $M^\prime=M/a_1M,$ $\M^\prime=\{(M_n+a_1M)/a_1M\}_{n\geq 0}$ and $\M_J^\prime=\{(J^nM+a_1M)/a_1M\}_{n\geq 0}$. Note that $G(\M^\prime)\cong G(\M)/f_1G(\M).$ Then by Lemma \ref{lemma-reduction-steps}, 
	$$e_1(\M^\prime)-e_1^J(M^\prime)=2e_0(\M^\prime)-2\ell_A(M^\prime/{M}^\prime_1)-
	\ell_A({M}^\prime_1/({M}^\prime_2+JM^\prime))$$ and 
	the conditions $(C_1)$ and $(C_2)$ are satisfied by the images of $a_1,\ldots,a_d$ in $A^\prime$ for both $A^\prime$ and $M^\prime.$ By induction hypothesis, we have that
	$$a_{d-1}(G(\M^\prime))\leq 2-(d-1)=3-d$$
	and  for all $1\leq i\leq d-2,$
	\begin{equation}\label{induction-LC}\h^i_{\xi}(G(\M^\prime))=[\h^i_{\xi}(G(\M^\prime))]_{2-i}\cong\h^i_\m(M^\prime).\end{equation}
	
	For $i=d-1$, we consider the exact sequence, obtained from  \eqref{exact-seq-LC},  $$[\h^{d-1}_{\xi}(G(\M^\prime))]_{a(G(\M))+1}\to  [\h^d_{\xi}(G(\M)(-1))]_{a(G(\M))+1}\to 0$$
	which implies that $[\h^{d-1}_{\xi}(G(\M^\prime))]_{a(G(\M))+1}\neq 0$. Therefore $a(G(\M))+1\leq a(G(\M^\prime))\leq 3-d$ i.e. $a(G(\M))\leq 2-d.$
	
	Now the condition $(C_1)$ implies that $J=(a_1,\ldots,a_d)$ is a standard system of parameters of $M$. Then $(a_2,\ldots,a_d)$ is a standard
	system of parameters of $M^\prime.$ Since  $M_{n+2}=J^n M_2$ for all $n\geq 1$ by Proposition \ref{theorem-4.3}, we have 
	$M^\prime_n\cap \h_{\m}^0(M^\prime)\subseteq JM^\prime\cap \h^0_{\m}(M^\prime)=0$ for all $n\geq 3$ by \cite[Corollary 2.3]{trung-gcm}. Therefore, 
	by part \ref{Proposition-5.3-item1}, 
	\begin{equation}\label{equation:int2}\h^0_{\xi}(G(\M^\prime))=[\h^0_{\xi}(G(\M^\prime))]_2\cong\h^0_\m(M^\prime).\end{equation}
	
	Since
	$\h^{i-1}_{\xi}(G(\M^\prime))=[\h^{i-1}_{\xi}(G(\M^\prime))]_{3-i}$ for all $1\leq i\leq d-1$ by  \eqref{induction-LC} and \eqref{equation:int2}, we get the exact sequences 
	$$0\to [\h^i_{\xi}(G(\M))]_{n-1}\to[\h^i_{\xi}(G(\M))]_n $$
	for all $n\geq 4-i$ and $1\leq i\leq d-1$.
	This gives  $[\h^i_{\xi}(G(\M))]_n=0$ for all $n\geq 3-i$ and $1\leq i\leq d-1.$
	
	From \eqref{exact-seq-LC} and \eqref{induction-LC}, we also get 
	$$[\h^i_{\xi}(G(\M))]_{n-1}\to[\h^i_{\xi}(G(\M))]_n\to 0$$
	for all $n\leq 1-i$ and $1\leq i\leq d-2$. 
	By Proposition \ref{theorem-5.2}, $G(\M)$ is generalized Cohen-Macaulay $R(J)$-module which implies that $[\h^i_{\xi}(G(\M))]_n=0$ for $n\ll 0 $ for $1\leq i\leq d-2$. Therefore, we have 
	$[\h^i_{\xi}(G(\M))]_n=(0)$ for all $1\leq i\leq d-2$ and $n\leq 1-i$. Thus, 	for $1\leq i\leq d-2$,
	$$\h^i_{\xi}(G(\M))=[\h^i_{\xi}(G(\M))]_{2-i}.$$
	Next, we show that $[\h^{d-1}_{\xi}(G(\M))]_n=(0)$ for all $n\leq 2-d$. Considering the monomorphisms
	$$0\to [\h^{d-1}_{\xi}(G(\M))]_{n-1}\to [\h^{d-1}_{\xi}(G(\M))]_n$$
	for all $n\leq 2-d$, it is enough to show that  $[\h^{d-1}_{\xi}(G(\M))]_{2-d}=(0).$ 
	By Proposition \ref{proposition-3.5}, we have the injections
	\begin{equation}\label{equation-int1}
		[\h^{d-1}_\xi(G(\M))]_{2-d}\hookrightarrow [\h^{d-1}_{\xi}(R(\M)/(R(\M)_+(1)+R(\M_J)))]_{2-d}\hookrightarrow [\h^{d-1}_\xi(S_J(\M))]_{1-d}.\end{equation}       
	We show that $[\h^{d-1}_{\xi}(S_J(\M))]_{1-d}=(0)$. 

{\bf{Claim:}} For $1\leq i\leq d-1,$ we have $[\h^i_{\xi}(R(\M)_+(1))]_n=(0)$ for all $n\leq 2-i$ and $n<0$.
\begin{proof}[Proof of Claim] The exact sequence $0\to R(\M)_+(1)\to R(\M)\to G(\M)\to 0$ of graded $R(J)$-modules induces the exact
	sequence $$ \h^{i-1}_{\xi}(G(\M))\to \h^i_{\xi}(R(\M)_+(1))\to \h^i_{\xi}(R(\M))$$
	of local cohomology modules. Since  $\h^{i-1}_{\xi}(G(\M))=[\h^{i-1}_{\xi}(G(\M))]_{3-i}$, we have the injective maps 
	$$[\h^{i}_{\xi}(R(\M)_+(1))]_n\hookrightarrow [\h^{i}_{\xi}(R(\M))]_n$$  for $n\neq 3-i.$ 
	On the other hand, the exact sequence $0\to R(\M)_+\to R(\M)\to M\to 0$ induces isomorphism $[\h^i_{\xi}(R(\M))]_n\cong [\h^i_{\xi}(R(\M)_+)]_{n}$ for all $n<0$.
	Thus,  \begin{equation}\label{equation-int}
		[\h^{i}_{\xi}(R(\M)_+(1))]_n\hookrightarrow [\h^{i}_{\xi}(R(\M))]_n \cong [\h^i_{\xi}(R(\M)_+(1))]_{n-1} 
	\end{equation}
	for all $n\leq 2-i$ and $n<0.$ Since $G(\M)$ is generalized Cohen-Macaulay $R(J)$-module by Proposition
	\ref{theorem-5.2}, $R(\M)$ is generalized Cohen-Macaulay $R(J)$-module by Theorem \ref{Appendix-proposition-main-1} 
which implies that $\h^i_{\xi}(R(\M))$ is finitely graded $R(J)$-module for $1\leq i\leq d-1$. Using \eqref{equation-int}, 
	we get that $[\h^i_{\xi}(R(\M)_+(1))]_n=0$ for all $n\leq 2-i$ and $n<0$.
\end{proof}
Now consider the exact sequences 
\begin{align}\label{equation:int5(a)}
0\to I_1 R(\M_J)\to R(\M)_+(1)\to S_J(\M)\to 0\\
0\to \m R(\M_J)\to R(\M)_+(1)\to S_J(\M)\to 0 \label{equation:int5(b)}
\end{align} corresponding to the filtrations $\M=\M_\I$ and $\M=\M_L$ respectively and the induced exact sequences of local cohomology modules 
$$[\h^{d-1}_{\xi}(R(\M)_+(1))]_{1-d}\to [\h^{d-1}_{\xi}(S_J(\M))]_{1-d}\to [\h^d_{\xi}(I_1 R(\M_J))]_{1-d} \text{ and }$$
$$
[\h^{d-1}_{\xi}(R(\M)_+(1))]_{1-d}\to [\h^{d-1}_{\xi}(S_J(\M))]_{1-d}\to [\h^d_{\xi}(\m R(\M_J))]_{1-d}
$$
respectively.
By Lemma \ref{lemma-2.6(a)}, $[\h^d_{\xi}(I_1 R(\M_J))]_{1-d}=(0)$ and by Lemma \ref{lemma-2.6(b)} $[\h^d_{\xi}(\m R(\M_J))]_{1-d}=(0)$. Since by the above claim $[\h^{d-1}_{\xi}(R(\M)_+(1))]_{1-d}=(0)$, we get 
$[\h^{d-1}_{\xi}(S_J(\M))]_{1-d}=(0)$ and so $[\h^{d-1}_{\xi}(G(\M))]_{2-d}=0$ by \eqref{equation-int1}.

It remains to show that $[\h^i_{\xi}(G(\M))]_{2-i}\cong \h^i_\m(M)$ for $1\leq i\leq d-1.$ 
The exact sequence $0\to M\xrightarrow{a_1} M\to M^\prime\to 0$ induces the isomorphism $\h^0_\m(M^\prime)\cong \h^1_\m(M)$ since 
$a_1\h^1_\m(M)=(0).$ Also, from \eqref{exact-seq-LC}, we have the exact sequence
$$0\to [\h ^0_{\xi}(G(\M^\prime))]_{2}\to [\h^0_{\xi}(G(\M))]_{1} \to [\h^0_{\xi}(G(\M))]_{2}=0 $$
Using \eqref{equation:int2}, we get that 
$$\h^1_{\xi}(G(\M))=[\h^1_{\xi}(G(\M))]_{1}\cong [\h^0_{\xi}(G(\M^\prime))]_{2}\cong \h^0_\m(M^\prime)\cong \h^1_\m(M).$$ 
Suppose $2\leq i\leq d-1.$  From Proposition \ref{proposition-3.5},
we have
$$[\h^i_{\xi}(G(\M))]_{2-i}\hookrightarrow [\h^i_{\xi}(R(\M)/(R(\M)_+(1)+R(\M_J)))]_{2-i}\hookrightarrow [\h^i_{\xi}(S_J(\M))]_{1-i}.$$ From \eqref{equation:int5(a)}, we have
$$[\h^{i}_{\xi}(R(\M)_+(1))]_{1-i}\to [\h^{i}_{\xi}(S_J(\M))]_{1-i}\to [\h^{i+1}_{\xi}(I_1 R(\M_J))]_{1-i}$$ and from \eqref{equation:int5(b)}, we have
$$[\h^{i}_{\xi}(R(\M)_+(1))]_{1-i}\to [\h^{i}_{\xi}(S_J(\M))]_{1-i}\to [\h^{i+1}_{\xi}(\m R(\M_J))]_{1-i}.$$
Since $[\h^{i}_{\xi}(R(\M)_+(1))]_{1-i}=0$ by the above claim , $[\h^{i+1}_{\xi}(I_1 R(\M_J))]_{1-i}\cong \h^i_\m(M)$ by Lemma \ref{lemma-2.6(a)} and $[\h^{i+1}_{\xi}(\m R(\M_J))]_{1-i}\cong \h^i_\m(M)$ by Lemma \ref{lemma-2.6(b)}, we get that
$$\h^i_{\xi}(G(\M))=[\h^i_{\xi}(G(\M))]_{2-i}\hookrightarrow \h^i_\m(M)$$
for $2\leq i\leq d-1.$ Then 
\begin{equation*}
	\mathbb{I}(M)\leq \mathbb{I}(G(\M))=\mathop\sum\limits^{d-1}_{i=1}\binom{d-1}{i}\ell(\h^i_{\xi}(G(\M)))\leq \mathop\sum\limits^{d-1}_{i=1}\binom{d-1}{i}\ell_A(\h^i_\m(M))=\mathbb{I}(M)
\end{equation*}
see \cite[Corollary 5.2]{trung-gcm} for the first inequality. Then  
$\h^i_{\xi}(G(\M))=[\h^i_{\xi}(G(\M))]_{2-i}\cong\h^i_\m(M)$ for $1\leq i\leq d-1.$ This completes the proof. 
\end{proof}
Now we state and prove our main results. 
\begin{theorem}\label{m1}

	 Let $M$ be a finitely generated faithful flat $A$-module and $\M=\M_L$ or $\M=\M_\I.$ Suppose $(C_1)$ and $(C_2)$ conditions are satisfied by $a_1,\ldots,a_d$ for both $A$ and $M.$ Suppose the equality \begin{equation*}e_1(\M)-e_1^J(M)=2e_0(\M)-2\ell(M/M_1)-\ell(M_1/(M_2+JM))
	\end{equation*}  holds. Then $G(\M)$ is generalized Cohen-Macaulay $R(J)$-module with $$\ell(\h^0_{\xi}(G(\M)))=\ell(\h^0_{\m}(M)),~ \h^i_{\xi}(G(\M))=[\h^i_{\xi}(G(\M))]_{2-i}\cong \h^i_{\m}(M)$$ for $1\leq i\leq d-1$ and $a(G(\M))\leq 2-d.$ Furthermore, 
	\begin{enumerate}
	 \item\label{the-theorem-implicaitons-cond-4} $e_2(\M)=e_1^J(M)+e_2^J(M)+e_1(\M)-e_0(\M)+\ell_A(M/M_1)$ \mbox{if } $d\geq 2$;
	\item\label{the-theorem-implicaitons-cond-5} $e_i(\M)=e_{i-2}^J(M)+2e_{i-1}^J(M)+e_i^J(M)$ \mbox{for } $3\leq i\leq d$. 
		\end{enumerate}
\end{theorem}
When this is the case, clearly $\mathbb{I}(G(\M))=\mathbb{I}(M)$ and $\depth G(\M)=\depth M$. 
\begin{proof}
	By Proposition \ref{theorem-5.2}, $G(\M)$ is generalized Cohen-Macaulay $R(J)$-module and the description of the local cohomology modules  $\h^i_{\xi}(G(\M))$ follows from Theorem \ref{proposition-5.3}. It remains to find the Hilbert coefficients. 	If $M_2\subseteq JM$, then $M$ is Cohen-Macaulay $A$-module and $M_{n+1}=J^nM_1$ for $n\geq 1$ by Remark \ref{remark-5.1}. Then we have 
	$e_1^J(M)=0$ and $e_1(\M)=2e_0(\M)-2\ell_A(M/M_1)-\ell_A(M_1/JM)=e_0(\M)-\ell_A(M/M_1)$ which implies $e_i(\M)=0$ for $2\leq i\leq d$, see \cite[Theorem 2.9]{rossi-valla}. 
	Since $e_i^J(M)=0$ for $1\leq i\leq d$, the  assertions (\ref{the-theorem-implicaitons-cond-4}) and (\ref{the-theorem-implicaitons-cond-5}) hold. 
	
	Suppose $M_2\nsubseteq JM.$ We have $e_i(\overline{\M})=e_i(\M)$, $e_i^{J\overline{A}}(\overline{M})=e_i^J(M)$ for $1\leq i\leq d-1$, $e_d(\M)=e_d(\overline{\M})+(-1)^d\ell_A(W)$, $e_d^J(M)=e_d^{J\overline{A}}(\overline{M})+(-1)^d\ell_A(W)$ 
	and $\ell_A(M/M_1)=\ell_A(\overline{M}/\overline{M_1})$ as $W\subseteq M_2$ by Proposition \ref{theorem-4.3}. Then  $e_1(\overline{\M})-e_1^{J\overline{A}}(\overline{M})=2e_0(\overline{\M})-2\ell_A(\overline{M}/\overline{M_1} )-\ell_A(\overline{M_1} /(\overline{M_2}+J\overline{M}))$, where $\overline{\M}$ denote the $I\overline{A}$-good filtration $\{ \overline{M_n}=(M_n+W)/W\}_{n\geq 0}$ of $\overline{M}$.
	 Therefore, by passing to the ring $\overline{A}$ and module $\overline{M}$, we may assume that the conditions $(C_1)$ and $(C_2)$ are satisfied by the images of $a_1,\ldots,a_d$ in $\overline{A}$ for both $\overline{A}$ and $\overline{M}$ see Lemma \ref{lemma-reduction-steps}. Moreover, $\overline{A}$ and $\overline{M}$ both satisfy $(C_3)$. By Corollary \ref{corr-2.5}, 
\begin{equation}\label{new-eqn-3}
	e_i(\M)=e_{i-1}^J(M)+e_i^J(M)+e_{i-1}(S_J(\M))
\end{equation}
	for $2\leq i\leq d.$  For $n\gg 0$, we write 
		\begin{align*}
		\ell_A(S_J(\M)_{n-1})=&\mathop\sum\limits_{i=0}^{d-1}(-1)^i e_i(S_J(\M))\binom{n+d-2-i}{d-1-i}\\
		=&e_0(S_J(\M))\binom{n+d-1}{d-1}+\mathop\sum\limits_{i=1}^{d-1}(-1)^i(e_i(S_J(\M))+\\&\qquad e_{i-1}(S_J(\M)))\binom{n+d-1-i}{d-1-i}.
	\end{align*}
	By Theorem \ref{lemma-3.3} and Proposition \ref{proposition-3.5}, we have 
	\begin{align*}
		\ell_A(S_J(\M)_{n-1})&= \ell_A\Big(\frac{M_n}{M_{n+1}}\Big)-\Big\{\ell_A\Big(\frac{M}{M_1}\Big)+\ell_A\Big(\frac{M_1}{M_2+JM}\Big)\Big\}\binom{n+d-1}{d-1}
		 + \\
         &\qquad \ell_A\Big(\frac{M_1}{M_2+JM}\Big)\binom{n+d-2}{d-2}
	\end{align*}
	for all $n\geq 0.$ On comparing the coefficients, we get 
	\begin{align*}
		e_0(S_J(\M))=e_0(\M)-\ell_A(M/M_1)-\ell_A(M_1/(M_2+JM)),\\
		e_1(S_J(\M))+e_0(S_J(\M))=e_1(\M)-\ell_A(M_1/(M_2+JM)) \text{ and }\\
		e_i(S_J(\M))+e_{i-1}(S_J(\M))=e_i(\M) \mbox{ for } 2\leq i\leq d-1.
	\end{align*}
Hence $e_1(S_J(\M))=e_1(\M)-e_0(S_J(\M))-\ell_A(M_1/(M_2+JM))=e_1(\M)-e_0(\M)+\ell_A(M/M_1)$ and  $e_i(S_J(\M))=e_i(\M)-e_{i-1}(S_J(\M))=e_{i-1}^J(M)+e_i^J(M)$,	for $2\leq i\leq d-1$. The last part follows from  Corollary \ref{corr-2.5}. We put these values in \eqref{new-eqn-3} to complete the proof.
\end{proof}
On restricting to the case of ideal filtration, we immediately obtain Theorem \ref{Corollary 7.4} and Theorem \ref{m2}.
\begin{corollary}\label{Corollary 7.4}
    Let $(A,\m)$ be a Noetherian local ring, $\I=\{I_n\}_{n\geq 0}$ be a multiplicative $I$-good filtration of $A$ and $J=(a_1,\ldots,a_d)\subseteq I$  a reduction of $\I$  such that the conditions $(C_1)$ and $(C_2)$ are satisfied by $a_1,\ldots,a_d$ for $A$. Suppose the equality \begin{equation*}e_1(\I)-e_1(J)=2e_0(\I)-2\ell_A(A/I_1)-\ell_A(I_1/(I_2+J))
	\end{equation*}  holds. Then $G(\I)$ is generalized Cohen-Macaulay with $$\ell_A(\h^0_{\xi}(G(\I)))=\ell_A(\h^0_{\m}(A)),~ \h^i_{\xi}(G(\I))=[\h^i_{\xi}(G(\I))]_{2-i}\cong \h^i_{\m}(A)$$ for $1\leq i\leq d-1$ and $a(G(\I))\leq 2-d.$ Furthermore, 
	\begin{enumerate}
	 \item
     $e_2(\I)=e_1(J)+e_2(J)+e_1(\I)-e_0(\I)+\ell_A(A/I_1)$ \mbox{if } $d\geq 2$;
	\item
    $e_i(\I)=e_{i-2}(J)+2e_{i-1}(J)+e_i(J)$ \mbox{for } $3\leq i\leq d$. 
		\end{enumerate}
        When this is the case, clearly $\mathbb{I}(G(\I))=\mathbb{I}(A)$ and $\depth G(\I)=\depth A$.
\end{corollary}

In a Buchsbaum local ring, having $\mathbb{I}(G(\I))=\mathbb{I}(A)$ is a sufficient condition to conclude that $G(\I)$ is Buchsbaum. We recall the following theorem  of \cite{sal20} for our use, also see \cite{ozeki13} for $I$-adic case. 
\begin{theorem}\cite[Theorem 1.2]{sal20}\label{theorem-of-sal20}
Let $(A,\m)$ be a Buchsbaum local ring and $J=(a_1,\ldots,a_d)\subseteq I$ a reduction of $\I$. Then the following statements are equivalent:
\begin{enumerate}
	\item $G(\I)$ is a Buchsbaum $R(\I)$-module and $\ell_A(\h^i_{\xi}(G(\I)))=\ell_A(\h^i_\m(A))$ for $0\leq i< d.$
	\item $\mathbb{I}(G(\I))=\mathbb{I}(A)$.
\end{enumerate}
\end{theorem}
Now we state our result on Buchsbaumness of $G(\I)$.
\begin{theorem}\label{m2}
	Let $(A,\m)$ be a Buchsbaum local ring and   $J=(a_1,\ldots,a_d)\subseteq I$ a reduction of $\I$  such that the condition $(C_2)$ is satisfied by $a_1,\ldots,a_d$ for $A$. Suppose \begin{equation*}e_1(\I)-e_1(J)=2e_0(\I)-2\ell_A(A/I_1)-\ell_A(I_1/(I_2+J)).
	\end{equation*} Then $G(\I)$ is Buchsbaum with $\mathbb{I}(G(\I))=\mathbb{I}(A)$.
\end{theorem}
\begin{proof}
	Suppose $A$ is Buchsbaum. Then the condition $(C_1)$ is satisfied and  $\mathbb{I}(G(\I))=\mathbb{I}(A)$ by Corollary \ref{Corollary 7.4}. Therefore,  by Theorem \ref{theorem-of-sal20}, $G(\I)$ is Buchsbaum.
\end{proof}
\section{\texorpdfstring
{Buchsbaumness of $F_{\m}(I)$}
{Buchsbaumness of Fm(I)}
}\label{Section-fiber-cone}

Now we discuss the passage of Buchsbaumness from the local ring to the special fiber ring  $F_\m(I)$ of $I$.  We prove Theorems \ref{theorem-fiber-1} and \ref{theorem-fiber-2} in this section which are the module versions of Theorem 
\ref{theorem-fiber-m1} and Theorem \ref{theorem-fiber-m2}.  Recall that $\mathcal{U}:=\m R(I)+R(I)_+$.  For the rest of the paper,  we restrict ourselves to $\I=\{I^n\}_{n\geq0}$ and $\M=\{I^nM\}_{n\geq 0}$. We write $f_0^I(M)$ for the fiber multiplicty of the module $F_\m(\M)=\mathop\oplus\limits_{n\geq 0} I^nM/\m I^nM.$ Let
 $J=(a_1,\ldots,a_d)\subseteq I$ be a minimal reduction of $\I$. Then, for $n\geq 0$, $$\ell_A(I^nM/\m I^nM)=\ell_A(M/\m I^nM)-\ell_A(M/I^nM)$$  which gives the relation $f_0^I(M)=e_1^I(M)-e_1(\M_L)+e_0^I(M)$. Therefore, 
\begin{equation*}
	\begin{split}
		e_1(\M_L)-e_1^J(M)=2e_0^I(M)-2\ell_A(M/\m M)-\ell_A(\m M/(I\m M+JM))\mbox{ if and only if} \\
		f_0^I(M)= e_1^I(M)-e_0^I(M)-e_1^J(M)+\ell_A(M/\m M)+\ell_A(M/(I\m M+JM)). 
	\end{split}
\end{equation*}
Further, we consider the following exact sequences of $R(I)$-modules. 
\begin{eqnarray}
	0\to N\to G(\M)\to F_\m(\M)\to 0 \label{1st-exact-eqn}\\
	0\to F_\m(\M)\to G(\M_L)\to N(-1)\to 0\label{2nd-exact-eqn}
\end{eqnarray}
where $N=\mathop\oplus\limits_{n\geq 0}\m I^nM/I^{n+1}M$. This induces the exact sequences of local cohomology modules
\begin{eqnarray}
	0\to [\h^0_{\mathcal{U}}(N)]_n\to\ldots\to [\h^i_{\mathcal{U}}(G(\M))]_n\to [\h^i_{\U}(F_\m(\M))]_n\to [\h^{i+1}_{\U}(N)]_n\to\ldots \text{ and }\label{long-exact-seq-1}\\
	0\to[\h^0_{\U}(F_\m(\M))]_n\to\ldots
	\to [\h^i_{\mathcal{U}}(G(\M_L))]_n\to [\h^i_{\mathcal{U}}(N)]_{n-1}\to[\h^{i+1}_{\U}(F_\m(\M))]_n\to\ldots\label{long-exact-seq-2}
\end{eqnarray}

\begin{lemma}\label{fiber-lemma-1}
 Let $M$ be a finitely generated faithful flat $A$-module. Suppose $(C_1)$ and $(C_2)$ conditions are satisfied by $a_1,\ldots,a_d$ for both $A$ and $M.$ Suppose
 \begin{equation}\label{the-theorem-2-condition1}
   e_1^I(M)-e_1^J(M)=2e_0^I(M)-2\ell_A(M/IM)-\ell_A(IM/(I^2M+JM)) \text{and}  
 \end{equation}
 \begin{equation}\label{the-theorem-2-condition2}
     f_0^I(M)= e_1^I(M)-e_0^I(M)-e_1^J(M)+\ell_A(M/\m M)+\ell_A(M/(I\m M+JM)).
 \end{equation}
  Then  
 \begin{enumerate}
 	\item\label{1} $$[\h^0_\mathcal{U}(N)]_n\cong \begin{cases}
 		0 &\mbox{ if } n= 0,1\\
 		(\m I^nM\cap W)/(I^{n+1}M\cap W)
 		&\mbox{ if } n\geq 2.
 	\end{cases} $$
\item \label{2}
 $$[\h^0_\mathcal{U}(F_\m(\M))]_n\cong \begin{cases}
 	0 &\mbox{ if } n= 0,1\\
 	W/(\m I^2M\cap W) &\mbox{ if } n=2\\
 	(I^nM\cap W)/(\m I^{n}M\cap W) &\mbox{ if } n\geq 3.\\
\end{cases}$$
\end{enumerate}
Furthermore, $\h^1_\mathcal{U}(F_{\m}(\M))=[\h^1_\mathcal{U}(F_\m(\M))]_{1}\cong \h_\m^1(M)$ and $\h^1_\U(N)=[\h^1_\U(N)]_1\hookrightarrow \h^1_\m(M).$ 
\end{lemma}
\begin{proof}
		From the exact sequences \eqref{long-exact-seq-1} and \eqref{long-exact-seq-2}, we have 
		$$[\h^0_\mathcal{U}(N)]_{n}=\ker \Big( [\h^0_\mathcal{U}(G(\M))]_{n}\to [\h^0_\mathcal{U}(F_\m (\M))]_{n}\Big )\text{ and }$$  $$[\h^0_\mathcal{U}(F_\m (\M))]_{n}= \ker\Big( [\h^0_\mathcal{U}(G (\M_L))]_{n}\to \h^0_\mathcal{U}(N(-1))]_{n}\Big).$$
		
		By Theorem \ref{proposition-5.3}, $[\h^0_\mathcal{U}(F_\m(\M))]_n=0=[\h^0_\mathcal{U}(N)]_n$ for $n=0,1$ and $[\h^0_\mathcal{U}(F_\m(\M))]_2\cong [\h^0_\mathcal{U}(G(\M_L))]_2\cong W/(\m I^2M\cap W)$. Further, $[\h^0_\mathcal{U}(N)]_2\cong \ker \Big( W/(I^3M\cap W)\to W/(\m I^2M\cap W)\Big )=(\m I^2M\cap W)/(I^3M\cap W)$ which implies $[\h^0_\mathcal{U}(F_\m(\M))]_3\cong \ker \Big((\m I^2M\cap W)/(\m I^3M\cap W)\to (\m I^2M\cap W)/(I^3M\cap W)\Big)=(I^3M\cap W)/(\m I^3M\cap W)$. We may now proceed by induction on $n$. Suppose $[\h^0_\mathcal{U}(N)]_{n-1}\cong 	(\m I^{n-1}M\cap W)/(I^{n}M\cap W)$ and  $[\h^0_\mathcal{U}(F_\m(\M))]_n\cong (I^nM\cap W)/(\m I^nM\cap W)$ for some $n\geq 3$. Then  $[\h^0_\mathcal{U}(N)]_{n}\cong \ker \Big( (I^nM \cap W)/(I^{n+1}M\cap W)\to (I^nM\cap W)/(\m I^nM\cap W) \Big)=(\m I^nM \cap W)/(I^{n+1}M\cap W)$ which implies that 
		$[\h^0_\mathcal{U}(F_\m(\M))]_{n+1}\cong \ker \Big( (\m I^nM\cap W)/(\m I^{n+1}M\cap W)\to (\m I^nM\cap W)/(I^{n+1}M\cap W)  \Big)=(I^{n+1}M\cap W)/(\m I^{n+1}M\cap W).$
        We now prove that $$\h^1_\mathcal{U}(F_{\m}(\M))=[\h^1_\mathcal{U}(F_\m(\M))]_{1}\cong \h_\m^1(M).$$ By the exact sequence \eqref{long-exact-seq-2}, we have
		$0=[\h^0_\mathcal{U}(N)]_{n-1}\to [\h^1_\mathcal{U}(F_{\m}(\M))]_n\to [\h^1_\mathcal{U}(G(\M_L))]_n\to [\h^1_\mathcal{U}(N(-1))]_{n}$ for $n=0,1,2$. Using Theorem \ref{proposition-5.3}, we get that $[\h^1_\mathcal{U}(F_{\m}(\M))]_n=0$ for $n=0,2$ and $[\h^1_\mathcal{U}(F_{\m}(\M))]_1\cong \ker \Big([\h^1_\mathcal{U}(G(\M_L))]_1\cong \h_\m^1(M)\to [\h^1_\mathcal{U}(N(-1))]_1 \Big)$ where $[\h^1_\mathcal{U}(N(-1))]_1=0$ follows from the exact sequence $0=[\h^0_\mathcal{U}(F_{\m}(\M))]_0\to [\h^1_\mathcal{U}(N)]_0\to [\h^1_\mathcal{U}(G(\M))]_0=0$.
		Therefore, $$[\h^1_\mathcal{U}(F_{\m}(\M))]_1=[\h^1_\mathcal{U}(G(\M_L))]_1\cong\h_\m^1(M).$$ Further by the exact sequence \eqref{long-exact-seq-2} and Theorem \ref{proposition-5.3}, we have exact sequences 
		$$0\to \frac{W\cap I^nM}{W\cap \m I^nM}\to \frac{W\cap \m I^{n-1}M}{W\cap \m I^nM}\to \frac{W\cap \m I^{n-1}M}{W\cap I^nM}\to [\h^1_\mathcal{U}(F_{\m}(\M))]_n\to 0$$
		for $n\geq 3$. Therefore, $[\h^1_\mathcal{U}(F_{\m}(\M))]_n=0$ for $n\geq 3.$
        
        Now we want to prove that $\h^1_\U(N)=[\h^1_\U(N)]_1.$ To see this, consider the exact sequence 
		$0=[\h^0_{\mathcal{U}}(F_\m(\M))]_n\to [\h^1_{\mathcal{U}}(N)]_n\to [\h^1_{\mathcal{U}}(G(\M))]_n=0$ for $n\leq 0$ which gives $[\h^1_{\mathcal{U}}(N)]_n=0$ for $n\leq 0.$ For $n=1$, we have $0\to [\h^1_{\mathcal{U}}(N)]_1\to [\h^1_{\mathcal{U}}(G(\M))]_1\cong \h^1_\m(M)\to [\h^1_{\mathcal{U}}(F_\m(\M))]_1.$ Thus,  $[\h^1_{\mathcal{U}}(N)]_1 \hookrightarrow \h^1_\m(M).$ For $n\geq 2$, we have 
		$$0\to  [\h^0_{\mathcal{U}}(N)]_n\to  [\h^0_{\mathcal{U}}(G(\M))]_n\to  [\h^0_{\mathcal{U}}(F_\m(\M))]_n\to  [\h^1_{\mathcal{U}}(N)]_n\to 0.$$
		which gives $W/(I^3M\cap W)\to W/(\m I^2M\cap W)\to [\h^1_{\mathcal{U}}(N)]_2\to 0$ and $(I^nM\cap W)/(I^{n+1}M\cap W)\to (I^nM\cap W)/(\m I^nM\cap W)\to[\h^1_{\mathcal{U}}(N)]_n\to 0$ for $n\geq 3.$
		This implies $[\h^1_{\mathcal{U}}(N)]_n=0$ for $n\geq 2.$ 
	\end{proof}
\begin{lemma}\label{fiber-lemma-2} With the same hypothesis as in Lemma \ref{fiber-lemma-1}, we have for all $ 1\leq i \leq d-1$,  
    $[\h^i_\U(F_\m(\M))]_n=0=[\h^i_\U(N)]_n$ except for at most $i$-many values of $n$. 
    \end{lemma}
\begin{proof}
   We induct on $ i$. By Lemma \ref{fiber-lemma-1}, $\h^1_\U(F_\m(\M))$ has at most one non-zero graded piece and also $\h^1_\U(N)$ has at most one non-zero graded piece. Now let $1\leq i\leq d-2$ and the assertion be true for $i$. 
 

    Now  for $\h^{i+1}_\mathcal{U}(F_\m(\M))$, consider the following exact sequence obtained from \eqref{long-exact-seq-2}, 
    \begin{equation}\label{8.2-induced}
        [ \h^i_{\mathcal{U}}(N)]_{n-1}\xrightarrow{f}[\h^{i+1}_{\U}(F_\m(\M))]_n\xrightarrow{g} [\h^{i+1}_{\mathcal{U}}(G(\M_L))]_n.
    \end{equation}    
    Since by Theorem \ref{proposition-5.3}, $[\h^{i+1}_{\mathcal{U}}(G(\M_L))]_n=0$ for $n\neq 1-i$ and by induction hypothesis $[\h^i_{\mathcal{U}}(N)]_{n-1}\neq 0$ for at most $i$-many values of $n$. Therefore $[\h^{i+1}_{\U}(F_\m(\M))]_n\neq0$ for at most $i+1$ many values of $n.$
    Now for $\h_\U^{i+1}(N)$, consider the following exact sequence obtained from \eqref{long-exact-seq-1},  
    \begin{equation}\label{8.3-induced}
      [\h^i_{\U}(F_\m(\M))]_n\xrightarrow{p}[\h^{i+1}_{\U}(N)]_n\xrightarrow{q}[\h^{i+1}_{\mathcal{U}}(G(\M))]_n.   
    \end{equation}
    Since $[\h^{i+1}_\U(G(\M))]_n=0$ for $n\neq 1-i$ and $[\h^i_\U(F_\m(\M))]_n\neq 0$ for at most $i$ values of $n.$ Hence $[\h^{i+1}_\U(N)]_n\neq 0$ for at most $i+1$ values of $n.$ 
\end{proof}
\begin{lemma}\label{fiber-lemma-3}  With the same hypothesis as in Lemma \ref{fiber-lemma-1}, we have
$\ell_A\Big([\h^i_\mathcal{U}(F_\m(\M))]_n\Big)< \infty$ for all $n$ and for all $ 0\leq i \leq d-1.$
\end{lemma}
\begin{proof}
 By Lemma \ref{fiber-lemma-1}, $\ell_A\Big([\h^i_\U(F_\m(\M))]_n\Big)<\infty$, $\ell_A\Big([\h^i_\U(N)]_n\Big)<\infty$ for $i=0,1$ and for all $ n.$ 
 Now suppose $1\leq i\leq d-2$ and $\ell_A\Big([\h^i_\U(F_\m(\M))]_n\Big)<\infty$ and $\ell_A\Big([\h^i_\U(N)]_n\Big)<\infty$ for all $n$. Consider the exact sequence 
    $$
    0 \to \frac{[ \h^i_{\mathcal{U}}(N)]_{n-1}}{\text{Ker}(f)}\cong \im(f)\to [\h^{i+1}_{\U}(F_\m(\M))]_n\to  \im(g)\to 0.  
    $$
     which is induced by \eqref{8.2-induced}.
     By induction hypothesis $\ell_A\Big([\h^i_{\mathcal{U}}(N)]_{n-1}\Big)< \infty$ and by Theorem \ref{m1}, $\ell_A\Big([\h^{i+1}_{\mathcal{U}}(G(\M_L))]_n\Big)<\infty.$ Therefore $\ell_A\Big([\h^{i+1}_{\U}(F_\m(\M))]_n\Big)<\infty$ for all $n.$ Now we want to prove $\ell_A\Big([\h^{i+1}_{\mathcal{U}}(N)]_{n}\Big)<\infty$ for all $n.$  Consider the exact sequence 
    $$
    0 \to \frac{[ \h^{i}_{\mathcal{U}}(F_\m(\M))]_{n}}{\text{Ker}(p)}\cong \im(p)\to [\h^{i+1}_{\U}(N)]_n\to  \im(q)\to 0.  
    $$
      induced by \eqref{8.3-induced}. 
    Since by induction hypothesis $\ell_A\Big([\h^i_{\mathcal{U}}(F_\m(\M))]_{n}\Big)< \infty$ and by Theorem \ref{m1},  $\ell_A\Big([\h^{i+1}_{\mathcal{U}}(G(\M))]_n\Big)<\infty.$ Therefore $\ell_A\Big([\h^{i+1}_{\U}(N)]_n\Big)<\infty$ for all $n.$
\end{proof}


\begin{theorem}\label{theorem-fiber-1}
	Suppose the same hypothesis as in Lemma \ref{fiber-lemma-1} holds true. Then 
	$F_\m(\M)$ is generalized Cohen-Macaulay $R(I)$-module. Moreover,  if $\depth M>0$, then  $\depth F_\m(\M)=\depth M.$
	\end{theorem}

\begin{proof}
By Lemma \ref{fiber-lemma-2} and Lemma \ref{fiber-lemma-3}, $F_\m(\M)$ is generalized Cohen-Macaulay $R(I)$-module. Now we prove $\depth F_\m(\M)=\depth M$, provided $\depth M>0.$ Let $\depth M= t$ where $1\leq t \leq d.$ Hence $\h^i_\m(M)=0$ for $0\leq i \leq t-1$ and $\h^t_\m(M)\neq 0.$ First we show that $\h^i_\U(F_\m(\M))=0$ for $0\leq i \leq t-1.$ We prove by induction on $t.$ If $t=1$, then it follows from Theorem \ref{fiber-lemma-1}. Suppose $t\geq 2$ and result holds for $1\leq s< t.$ By induction hypothesis, we have $\h^i_\U(F_\m(\M))=0$ for $0\leq i \leq s-1.$ Consider the exact sequence induced by the short exact sequence \eqref{1st-exact-eqn},
$$
\h^{s-2}_\U(F_\m(\M))
\to \h^{s-1}_\U(N)\to \h^{s-1}_\U(G(\M)). $$
By induction hypothesis, $\h^{s-2}_\U(F_\m(\M))=0$ and by Theorem \ref{m1}, we have $\h^{s-1}_\U(G(\M))=0$. This gives $\h^{s-1}_\U(N)=0.$ Now we consider the exact sequence induced by the short exact sequence \eqref{2nd-exact-eqn},
$$
\h^{s-1}_\U(N(-1))\to \h^s_\U(F_\m(M))\to \h^s_\U(G(\M_L)). 
$$
Since $\h^{s-1}_\U(N(-1))=0$ and by Theorem \ref{m1}, $\h^s_\U(G(\M_L))=0,$ we get $\h^s_\U (F_\m(\M))=0.$ So we conclude that $\h^i_\U(F_\m(\M))=0$  $\forall ~ 0\leq i \leq t-1.$ Now we prove $\h^t_\U(F_\m(\M))\neq 0$. 

Since $\h^{t-1}_\U(F_\m(\M))=0$ and by Theorem \ref{proposition-5.3}, $[\h^t_\U(G(\M))]_{1-t}=0$, we get $[\h^t_\U(N)]_{1-t}=0$ using the sequence \eqref{long-exact-seq-1}. This implies $[\h^t_\U(N(-1))]_{2-t}=0.$ Further, since $[\h^t_\U(G(\M_L))]_{2-t}\neq 0$, we get $[\h^t_\U(F_\m(\M))]_{2-t}\neq 0$ looking at the exact sequence \eqref{long-exact-seq-2}. This gives  $\depth F_\m(\M)= \depth M .$
\end{proof}
Finally to obtain Theorem \ref{theorem-fiber-m1} as stated in Introduction of the paper, we consider the case with $M=A$ and $\M=\{I^n\}_{n\geq 0}$, we write 
$f_0(I)$, in place of $f_0^I(M)$ which is the multiplicity of $F_\m(I)=\mathop\oplus\limits_{n\geq 0}\frac{I^n}{\m I^n}$. Then $$f_0(I)= e_1(I)-e_0(I)-e_1(J)+\ell_A(A/\m)+\ell_A(A/(\m I+J))$$ if and only if $f_0(I)=e_1(I)-e_1(J)-e_0(I)+\ell_A(A/I)-\mu(I)-d+1$. We get the following result as a corollary of Theorem \ref{theorem-fiber-1}. 
\begin{corollary}\label{corollary_8.5}
	Let $(A,\m)$ be a Noetherian local ring,  $J=(a_1,\ldots,a_d)\subseteq I$  a reduction of $I$. Let $a_1,\ldots,a_d$  satisfy $(C_1)$ and $(C_2)$ for $A$.  Suppose 	\begin{align}\label{8.7}
		e_1(I)-e_1(J)&=2e_0(I)-2\ell_A(A/I)-\ell_A(I/(I^2+J))
        \text{ and }\\
		\label{8.8}f_0(I)&= e_1(I)-e_0(I)-e_1(J)+\ell_A(A/I)-\mu(I)-d+1.	
	\end{align} Then  
	$F_\m(I)$ is generalized Cohen-Macaulay with $\depth F_\m(I)=\depth A$, provided $\depth A>0.$
	\end{corollary}
In order to find the geometric regularity, we need the following lemma. 

\begin{lemma} Suppose the same hypothesis as in Lemma \ref{fiber-lemma-1} holds. Then $a_i(F_\m(M))=\left\lfloor \frac{i}{2} \right\rfloor+1$ and $a_i(N)=\left\lfloor \frac{i-1}{2} \right\rfloor+1$ for all $2\leq i\leq d$ where $\left\lfloor i \right\rfloor=\{x\in \mathbb{Z}: x\leq i < x+1\}$ and $a_i(*):=\max\{j~|~H^i_{-}(*)_j\neq 0\}$. 
\end{lemma}
\begin{proof}
   We proceed by induction on $i$.
   Let $i=2$. 
    By Theorem \ref{proposition-5.3}, $\h^1_\U(G(\M_L))=[\h^1_{\mathcal{U}}(G(\M_L))]_1$ and $\h^2_\U(G(\M_L))=[\h^2_{\mathcal{U}}(G(\M_L))]_0$  while by Lemma \ref{fiber-lemma-1},  $\h^1_{\mathcal{U}}(N)=[\h^1_{\mathcal{U}}(N)]_{1}$. Using the long exact sequence \eqref{long-exact-seq-2}, we obtain $a_2(F_\m(\M))=2.$ To compute $a_2(N)$, again
     by Theorem \ref{proposition-5.3}, $\h^1_\U(G(\M))=[\h^1_{\mathcal{U}}(G(\M))]_1$ and $\h^2_\U(G(\M))=[\h^2_\U(G(\M))]_0$ while by Lemma \ref{fiber-lemma-1}, we have $\h^1_\U(F_\m(\M))=[\h^1_\U(F_\m(\M))]_1.$ From the long exact sequence \eqref{long-exact-seq-1}, we get $a_2(N)=1.$
    Now let $3\leq i\leq d-1$ and the assertion be true for $i$.

  For $a_{i+1}(F_\m(\M))$, 
 by Theorem \ref{proposition-5.3},  $\h^{i}_{\mathcal{U}}(G(\M_L))=[\h^{i}_{\mathcal{U}}(G(\M_L))]_{2-i}$, $\h^{i+1}_{\mathcal{U}}(G(\M_L))=[\h^{i+1}_{\mathcal{U}}(G(\M_L))]_{1-i}$ and by the induction hypothesis, $a_i(N)=\left\lfloor \frac{i-1}{2} \right\rfloor+1.$ Hence, in sequence \eqref{long-exact-seq-2} highest non-vanishing graded component of $\h^{i+1}_{\U}(F_\m(\M))$ occurs in degree $\left\lfloor \frac{i-1}{2} \right\rfloor+2$, so that $a_{i+1}(F_\m(\M))=\left\lfloor \frac{i+1}{2} \right\rfloor+1.$
    Finally, to compute $a_{i+1}(N)$, again 
 by Theorem \ref{proposition-5.3}, $\h^{i}_{\mathcal{U}}(G(\M))=[\h^{i}_{\mathcal{U}}(G(\M))]_{2-i}$, $\h^{i+1}_{\mathcal{U}}(G(\M))=[\h^{i+1}_{\mathcal{U}}(G(\M))]_{1-i}$ and by induction hypothesis $a_i(F_\m(\M))=\left\lfloor \frac{i}{2} \right\rfloor+1.$ Hence in the sequence \eqref{long-exact-seq-1} the highest non-vanishing graded piece of $\h^{i+1}_\U(N)$ occurs in degree $\left\lfloor \frac{i}{2} \right\rfloor+1$, so that $a_{i+1}(N)=\left\lfloor \frac{i}{2} \right\rfloor+1.$  
\end{proof}
For any finitely generated graded $A$-module $M$ geometric regularity of $M$ as
\begin{equation}\label{Def_g-reg}
\g-reg(M):=\max\{a_i(M)+i~|~i\geq 1\}.    \end{equation}

\begin{corollary}
   With the same hypothesis as in Lemma \ref{fiber-lemma-1}, we have $\g-reg(F_\m(\M))=\left\lfloor \frac{d}{2} \right\rfloor+1+d$. 
\end{corollary}
\begin{corollary}\label{corollay_8.8}
    With the same hypothesis as in Corollary \ref{corollary_8.5}, we have $\g-reg(F_\m(I))=\left\lfloor \frac{d}{2} \right\rfloor+1+d.$ 
\end{corollary}
In the next theorem, we prove the passage of Buchsbaumness to $F_\m(\M)$ for a Buchsbaum module of almost maximal depth. In dimension two, one can remove the depth condition and conclude that 	$F_{\m}(\M)/ \h^0_\U(F_\m (\M))$ is Buchsbaum.
\begin{theorem}\label{theorem-fiber-2}  Let $M$ be a faithful flat Buchsbaum $A$-module with $\depth M\geq d-1.$ Suppose $(C_2)$ condition is satisfied by $a_1,\ldots,a_d$ for both $A$ and $M.$ Suppose  the equalities  in \eqref{the-theorem-2-condition1} and \eqref{the-theorem-2-condition2} hold. Then 
	$F_\m(\M)$ is Buchsbaum $R(I)$-module.
    \end{theorem}
\begin{proof}
	 Let $d=1$. 	Since $\m\cdot \h^0_\m(M)=0$, we have    $\mathcal{U}\cdot\h^0_{\mathcal{U}}(F_\m(\M))=0$ which implies  $F_\m(\M)$  is Buchsbaum $R(I)$-module. Suppose $d\geq 2$ and the assertion holds for $d-1$. By Theorem \ref{m1} and Theorem \ref{theorem-fiber-1}, $G(\M)$, $G(\M_L )$ and
	 $F_{\m }(\M)$ have positive depths. Let $M^\prime=M/a_1M$ and $f=a_1t\in R(I).$  Then $M^\prime$ is Buchsbaum, the condition $(C_2)$ is satisfied by images of $a_1,\ldots,a_d$ in $A^\prime$ for both $A^\prime$ and $M^\prime.$ Moreover, the equalities in \eqref{the-theorem-2-condition1} and \eqref{the-theorem-2-condition2} are satisfied for $\M^\prime$. By induction hypothesis, $F_{\m A^\prime}(\M^\prime)$ is Buchsbaum. 
	 Since $f=a_1t$ is a regular element on both $F_{\m}(\M)$ and $G(\M )$, we have $F_{\m A^\prime}(\M^\prime)\cong F_{\m }(\M)/fF_{\m}(\M)$.  By \cite[I. Proposition 2.19]{SVbook}, $F_{\m}(\M)$ is Buchsbaum module. 
	\end{proof}
    \begin{corollary}\label{theorem-fiber-2(a)}
        Let $A$ be a Buchsbaum local ring with $\depth A\geq d-1$ and   $J=(a_1,\ldots,a_d)\subseteq I$ a reduction of $I$  such that $(a_1,\ldots, \check{a_i},\ldots,a_d):a_i\subseteq I$ for $1\leq i\leq d$. Suppose the equalities in \eqref{8.7} and \eqref{8.8} hold. Then 
		$F_\m(I)$ is Buchsbaum $R(I)$-module. 
    \end{corollary}
\begin{corollary}Let $M$ be a two-dimensional faithful flat Buchsbaum $A$-module. Let the rest of the hypothesis be the same as in 
	 Theorem \ref{theorem-fiber-2}. Then 
	$F_{\m}(\M)/ \h^0_\U(F_\m (\M))$ is Buchsbaum.
	\end{corollary}
	 \begin{proof}
	 	 Let $\overline{M}=M/W$. Then $\depth \overline{M}\geq 1$, $\overline{M}$ is Buchsbaum $\overline{A}$-module,
	 	the conditions  $(C_2)$ and the equalities in \eqref{the-theorem-2-condition1} and \eqref{the-theorem-2-condition2} are satisfied for $\overline{M}$. Therefore, $F_{\m \overline{A}}(\overline{\M})$ is Buchsbaum. Further, the kernel of $F_{\m}(\M)\to F_{\m \overline{A}}(\overline{\M})$ is $\h^0_\U(F_\m (\M))$, i.e., $F_{\m \overline{A}}(\overline{\M})\cong F_{\m}(\M)/ \h^0_\U(F_\m (\M)).$ 
	 \end{proof}
     \begin{corollary}\label{corollay_8.12}
         Suppose $A$ is a Buchsbaum local ring of dimension two and $\depth A=0$.
	Let the rest of the hypothesis be the same as in 
	Corollary \ref{theorem-fiber-2(a)}. Then 
	$F_{\m}(I)/ \h^0_\U(F_\m (I))$ is Buchsbaum.
     \end{corollary}
\section*{Acknowledgements}

Both the authors are partially supported by the SERB (ANRF) Grant No.~SPG/2022/002099, Government of India. 
AKY was partially supported by the UGC Fellowship (NTA Ref.\ No.~191620042352), Government of India, during the initial phase of this paper. 
KS was supported by the Postdoctoral Fellowship of Chennai Mathematical Institute at the time this work was initiated.
     
\addcontentsline{toc}{section}{References}

\end{document}